\begin{document}

\newtheorem{theorem}{Theorem}
\newtheorem{lemma}[theorem]{Lemma}
\newtheorem{algol}{Algorithm}
\newtheorem{cor}[theorem]{Corollary}
\newtheorem{prop}[theorem]{Proposition}

\newtheorem{proposition}[theorem]{Proposition}
\newtheorem{corollary}[theorem]{Corollary}
\newtheorem{conjecture}[theorem]{Conjecture}
\newtheorem{definition}[theorem]{Definition}
\newtheorem{remark}[theorem]{Remark}

\newcommand{\comm}[1]{\marginpar{%
\vskip-\baselineskip 
\raggedright\footnotesize
\itshape\hrule\smallskip#1\par\smallskip\hrule}}

\def\cA{{\mathcal A}}
\def\cB{{\mathcal B}}
\def\cC{{\mathcal C}}
\def\cD{{\mathcal D}}
\def\cE{{\mathcal E}}
\def\cF{{\mathcal F}}
\def\cG{{\mathcal G}}
\def\cH{{\mathcal H}}
\def\cI{{\mathcal I}}
\def\cJ{{\mathcal J}}
\def\cK{{\mathcal K}}
\def\cL{{\mathcal L}}
\def\cM{{\mathcal M}}
\def\cN{{\mathcal N}}
\def\cO{{\mathcal O}}
\def\cP{{\mathcal P}}
\def\cQ{{\mathcal Q}}
\def\cR{{\mathcal R}}
\def\cS{{\mathcal S}}
\def\cT{{\mathcal T}}
\def\cU{{\mathcal U}}
\def\cV{{\mathcal V}}
\def\cW{{\mathcal W}}
\def\cX{{\mathcal X}}
\def\cY{{\mathcal Y}}
\def\cZ{{\mathcal Z}}

\def\C{\mathbb{C}}
\def\F{\mathbb{F}}
\def\K{\mathbb{K}}
\def\Z{\mathbb{Z}}
\def\R{\mathbb{R}}
\def\Q{\mathbb{Q}}
\def\N{\mathbb{N}}
\def\M{\textsf{M}}

\def\({\left(}
\def\){\right)}
\def\[{\left[}
\def\]{\right]}
\def\<{\langle}
\def\>{\rangle}

\def\e{e}

\def\eq{\e_q}
\def\fS{{\mathfrak S}}

\def\lcm{{\mathrm{lcm}}\,}

\def\fl#1{\left\lfloor#1\right\rfloor}
\def\rf#1{\left\lceil#1\right\rceil}
\def\mand{\qquad\mbox{and}\qquad}

\def\jt{\tilde\jmath}
\def\ellmax{\ell_{\rm max}}
\def\llog{\log\log}

\def\Qbar{\overline{\Q}}
\def\GL{{\rm GL}}
\def\Aut{{\rm Aut}}
\def\End{{\rm End}}
\def\Gal{{\rm Gal}}

\title
[Congruences with intervals and subgroups]
{\bf  Congruences with intervals and subgroups modulo a prime}
\author{Marc Munsch}
\address{CRM, Universit\'e de Montr\'eal, 5357 Montr\'eal, Qu\'ebec }
\email{munsch@dms.umontreal.ca}
\author{Igor E.~Shparlinski} 
\address{Department of Pure Mathematics, University of 
New South Wales, Sydney, NSW 2052, Australia}
\email{igor.shparlinski@unsw.edu.au}

\date{\today}

\subjclass{11G07, 11L40, 11Y16}
\keywords{Character sum, large sieve} 

\begin{abstract} We obtain new results about the representation 
of almost all residues modulo a prime $p$  by a product of a small integer 
and also an element of small multiplicative subgroup 
of $(\Z/p\Z)^*$. These results are based on some ideas, and their 
modifications, of a recent work of J.~Cilleruelo and M.~Z.~Garaev (2014).
\end{abstract}

\bibliographystyle{alpha}
\maketitle

\section{Introduction}

It is well known that the progress on many classical and 
modern number theoretic questions 
depends on the existence asymptotic formulas and good upper and lower bounds
on the number of solutions to the congruences of the form
\begin{equation}
\label{eq:cong m}
au \equiv x \pmod m
\end{equation}
where $u$ runs through a multiplicative subgroup $\cG$ of
the group of units $\Z_m^*$ of the residue ring $\Z_m$ modulo an 
integers $m\ge 2$ and $x$ runs through a set $\{A+1, \ldots, A+H\}$ 
of $H$ consecutive integers, see~\cite{KoSh1} for an outline 
of such questions. In the special case when $m=p$ is a prime number and 
$\cG$ is a group of squares, this is a celebrated question 
about the distribution of quadratic residues.

Recently, various modifications of the congruence~\eqref{eq:cong m}
have been studied, such as congruences with 
elements from more general sets than subgroups on the left hand side
and also with products and ratios of 
variables from short intervals on the right hand side, 
see~\cite{BGKS1,BGKS2,
BKS1,BKS2,CillGar1,CillGar2,Gar1,Gar2,GarKar1,HarmShp,KoSh2}
and references therein. New applications of such congruences have also 
been found as well  and include questions about 
\begin{itemize}
\item   nonvanishing of 
Fermat quotients~\cite{BFKS};
\item   estimating fixed points
of the discrete logarithm~\cite{BGKS1,BGKS2};
\item distribution of pseudopowers~\cite{BKPS};
\item distribution of digits in reciprocals 
of primes~\cite{ShpSte}. 
\end{itemize}

Here we consider the congruence~\eqref{eq:cong m} in the special case 
when $m=p$ is prime. Furthermore, we are mostly interesting in the 
solvability of~\eqref{eq:cong m} for rather small intervals and 
subgroups. 

Since we consider congruences modulo primes, 
it is convenient to use the language of finite fields.

For a prime $p$ use $\F_p$ to denote the finite 
field of $p$ elements, which we assume to represented 
by the set $\{0, 1, \ldots, p-1\}$. We say that a set $\cI \subseteq \F_p$ is 
an interval of length $H$ if it contains $H$ consecutive 
elements of $\F_p$, assuming that $p-1$ is followed by $0$. 
Furthermore we say that $\cI$ is an initial interval if 
$\cI = \{1, \ldots, H\}$ (we note that it is convenient to exclude 
$0$ from initial intervals).  

Furthermore, instead of subgroups we consider a more general class sets, 
which also contain sets of $N$ consecutive powers $\{g, \ldots, g^{N}\}$ 
of a fixed element $g \in \F_p^*$. 

%

Namely, as usual for a set  $\cU \subseteq \F_p$ we use $\cU^{(m)} $ to denote its 
$m$-fold product set 
$$
\cU^{(m)} =  \{u_1\ldots u_m~:~ u_1,\ldots, u_m \in \cU\}.
$$

We say that $\cU \subseteq \F_p^*$ is an {\it approximate
subgroup\/} of $ \F_p^*$  if 
$$
\# \cU^{(2)} \le (\# \cU)^{1+o(1)},
$$
as $\# \cU \to \infty$. 

Consequently, here we study the solvability of 
equations over $\F_p$ of the type 
\begin{equation}
\label{eq:eq p}
au = x, \qquad u \in \cU, \ x \in \cI, 
\end{equation}
where $\cU \subseteq \F_p^*$  is an approximate
subgroup of $ \F_p$ and $\cI \subseteq \F_p^*$ is 
an interval. 

It has been shown by Cilleruelo and Garaev~\cite{CillGar2}
that for any $\varepsilon > 0$ there is $\delta > 0$ such that 
if $\cU = \cG$ is a subgroup of 
order $\# \cU \ge p^{3/8}$ and $\cI$ is an initial 
interval of length $\# \cI \ge p^{5/8 + \varepsilon}$ that~\eqref{eq:eq p}
has a solution for all but at most $O(p^{1-\delta})$ 
values of $a \in \F_p$. 

Here we show that the ideas of Cilleruelo and Garaev~\cite{CillGar2}
combined with the approach of Garaev~\cite{Gar2} to estimating
character sums for almost all primes, allows us to obtain similar results for a
wider range of sizes $\#\cU$ and $\# \cI$ (and also for approximate
subgroups $\cU$).  Furthermore, we use some tools from additive combinatorics
to establish a certain new  result about 
subsets of approximate subgroups, which maybe of independent 
interest. 

Throughout the paper, the implied constants in the  
symbols $O$ and $\ll$ are absolute. We recall that the 
assertions $U=O(V)$ and $U\ll V$ are both equivalent to the 
inequality $|U|\le cV$ with some constant $c$.

\section{Background on exponential and character sums}

Let $\cX_q$ denote the set of all $\varphi(q)$  multiplicative characters modulo an integer  $q\ge 2$
and let $\cX_q^*$ be the set of primitive characters $\chi\in \cX_q$,
where $\varphi(q)$ to denotes the Euler function of $q$, 
we refer to~\cite{IwKow} for a background on characters.

Let $\cA=(a_n)_{n\in\mathbb{N}}$ be an arbitrary sequence of complex numbers. 
For an integer $h$ and a character $\chi \in \cX_q$ we consider the
weighted  character sums
$$
S_q(\chi;h;\cA) = \sum_{n=1}^h a_n\chi(n) .
$$ 
If $a_n=1$ for all $n$, we simply use the notation 
$$
S_q(\chi;h) = \sum_{n=1}^h \chi(n) .
$$

First we recall that by the
P{\'o}lya-Vinogradov  (for $\nu =1$) and Burgess
(for $\nu\ge2$)  bounds,
see~\cite[Theorems~12.5 and~12.6]{IwKow}, for an arbitrary  integers
$q \ge h\ge 1$, the bound
\begin{equation}
\label{eq:PVB} 
\max_{\chi \in \cX_q \backslash \{\chi_0\}}
\left|S_q(\chi;h) \right|  \le h^{1 -1/\nu} q^{(\nu+1)/4\nu^2 + o(1)}
\end{equation}
holds with $\nu = 1,2,3$ for any $q$ and with an arbitrary
positive integer $\nu$ if $q$ is cube-free. 

It is well-known that  
assuming the Generalized Riemann Hypothesis (GRH), we derive a ``square-root cancellation'' bound
\begin{equation}
\label{eq:SQRC} 
\max_{\chi \in \cX_q \backslash \{\chi_0\}}
\left|S_q(\chi;h) \right|  \le h^{1/2} q^{o(1)},
\end{equation} 
 and in particular is quoted
in~\cite[Bound~(13.2)]{Mont}. Despite this, it seems to be
difficult to find a proof of this bound, however  one 
can easily derive it from~\cite[Theorem~2]{GrSo}.

%

%


%

Furthermore, we use the following well-known property of the Gauss sums
$$
\tau_q(\chi) = \sum_{v=1}^q \chi(v) \e(v/q), \qquad \chi\in \cX_q,
$$
see, for example,~\cite[Equation~(3.12)]{IwKow}.

\begin{lemma}
\label{lem:tau chi}
For any primitive multiplicative character $\chi \in \cX_q^*$ and an integer $b$ with
$\gcd(b,q) = 1$,  we have
$$
\chi(b) \tau_q( \overline\chi) =
\sum_{\substack{v=1\\ \gcd(v,q) =1}}^{q}  \overline\chi(v) \e(bv/q),
$$
where $\overline\chi$ is the complex conjugate character to $\chi$.
\end{lemma}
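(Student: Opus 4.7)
The plan is to reduce the right-hand side to the definition of $\tau_q(\overline\chi)$ by a multiplicative change of variables, exploiting the fact that multiplication by $b$ permutes the reduced residues modulo $q$.

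Concretely, since $\gcd(b,q) = 1$, the map $u \mapsto b^{-1} u \pmod q$ is a bijection on the set $\{v \in \{1,\ldots,q\} : \gcd(v,q)=1\}$. So I would substitute $v \equiv b^{-1} u \pmod q$ in the right-hand side of the desired identity. Under this substitution, $\e(bv/q) = \e(u/q)$, and by multiplicativity $\overline\chi(v) = \overline\chi(b^{-1}) \overline\chi(u)$. Since $|\overline\chi(b)| = 1$ (as $b$ is coprime to $q$), we have $\overline\chi(b^{-1}) = \overline\chi(b)^{-1} = \chi(b)$. Pulling the constant $\chi(b)$ outside the sum leaves
$$
\sum_{\substack{u=1\\ \gcd(u,q)=1}}^q \overline\chi(u) \e(u/q),
$$
which is precisely $\tau_q(\overline\chi)$ once one observes that in the defining sum $\sum_{v=1}^q \overline\chi(v) \e(v/q)$ the terms with $\gcd(v,q) > 1$ contribute zero because $\overline\chi(v) = 0$ there (any character modulo $q$, primitive or not, vanishes on non-units).

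There is essentially no obstacle here: the entire content of the lemma is the two-line change of variables together with the identity $\overline\chi(b^{-1}) = \chi(b)$ for units $b$. The only minor point worth stating carefully is this last identity, which relies on the unit modulus of character values on $(\Z/q\Z)^*$ rather than on primitivity of $\chi$; primitivity is not actually needed for the argument (it is simply the setting in which one usually wishes to invoke such a formula).
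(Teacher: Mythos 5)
Your proof is correct: the paper does not prove this lemma at all but simply cites it as a known fact (Equation~(3.12) of Iwaniec--Kowalski), and your change-of-variables argument $v \mapsto b^{-1}u$ is precisely the standard derivation one would find there. Your side remark is also accurate --- under the hypothesis $\gcd(b,q)=1$ primitivity plays no role (it only matters for the version of the identity valid for all $b$, where one needs the right-hand side to vanish when $\gcd(b,q)>1$).
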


By~\cite[Lemma~3.1]{IwKow} we also have:

\begin{lemma}
\label{lem:tau size}
For any  $\chi\in \cX_q^*$ we have
$$
|\tau_q(\chi)| = q^{1/2}.
$$
\end{lemma}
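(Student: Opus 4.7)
The plan is to square the Gauss sum and collapse the resulting double sum by orthogonality of additive characters, with Lemma~\ref{lem:tau chi} supplying the required factorization. As a preliminary step I would promote the identity in Lemma~\ref{lem:tau chi} to all residues $b$ modulo $q$, not only those coprime to $q$. For $\gcd(b,q)>1$ the left-hand side is automatically zero because $\chi(b)=0$. For the right-hand side, if $d=\gcd(b,q)$ and $q=dq_{0}$, the additive character $v\mapsto \e(bv/q)$ depends only on $v$ modulo $q_{0}$, so the sum breaks into translates of the kernel $K$ of the reduction $\Z_{q}^{*}\to \Z_{q_{0}}^{*}$. Primitivity of $\chi$ (equivalently of $\overline{\chi}$) means exactly that $\overline{\chi}$ does not factor through $\Z_{q_{0}}^{*}$, so $\overline{\chi}$ restricted to $K$ is a nontrivial character and sums to zero. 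Hence the identity extends to every $b$.

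With the extended identity in hand, I would take absolute values on both sides and sum over $b=1,\ldots,q$:
$$
\varphi(q)\,|\tau_{q}(\overline{\chi})|^{2}=\sum_{b=1}^{q}|\chi(b)|^{2}\,|\tau_{q}(\overline{\chi})|^{2}=\sum_{b=1}^{q}\Bigl|\sum_{\gcd(v,q)=1}\overline{\chi}(v)\,\e(bv/q)\Bigr|^{2}.
$$
Expanding the square on the right and exchanging the order of summation yields
$$
\sum_{\gcd(v_{1},q)=\gcd(v_{2},q)=1}\overline{\chi}(v_{1})\chi(v_{2})\sum_{b=1}^{q}\e\bigl(b(v_{1}-v_{2})/q\bigr),
$$
and orthogonality of the additive characters reduces the inner sum to $q$ when $v_{1}\equiv v_{2}\pmod q$ and to zero otherwise. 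The diagonal contribution is therefore $q\,\varphi(q)$.

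Comparing the two sides gives $|\tau_{q}(\overline{\chi})|^{2}=q$, and since $\chi\mapsto\overline{\chi}$ is a bijection on $\cX_{q}^{*}$ this is equivalent to the desired identity $|\tau_{q}(\chi)|=q^{1/2}$. The only non-routine step is the extension of Lemma~\ref{lem:tau chi} to $b$ with $\gcd(b,q)>1$, which is the one place where the primitivity hypothesis genuinely enters; the remainder is a standard orthogonality computation.
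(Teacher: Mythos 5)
Your proof is correct. The paper offers no argument for this lemma at all---it simply cites \cite[Lemma~3.1]{IwKow}---and your proof (extending the identity of Lemma~\ref{lem:tau chi} to all residues $b$ by using primitivity on the kernel of $\Z_q^*\to\Z_{q_0}^*$, then summing $|\cdot|^2$ over $b$ and invoking additive orthogonality) is exactly the standard argument given in that reference, so nothing further is needed.
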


We also   recall the classical large sieve
inequality, see~\cite[Theorem~7.11]{IwKow}:

\begin{lemma}
\label{lem:Large Sieve}
Let $a_1, \ldots, a_T$ be an arbitrary sequence
of complex numbers and let
$$
A = \sum_{n=1}^T |a_n|^2 \mand
T(u) = \sum_{n=1}^T a_n \exp(2 \pi i n u).
$$
Then, for an arbitrary integer  $Q\ge 1$, we have
$$
\sum_{q=1}^Q \sum_{\substack{v=1\\ \gcd(v,q) =1}}^{q}
\left|T(v/q)\right|^2 \ll
\(Q^2+ T\) A.
$$
\end{lemma}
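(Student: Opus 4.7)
The strategy is to reduce the lemma to a general analytic inequality for well-spaced points in $\R/\Z$ and then to prove that inequality by $\ell^2$ duality combined with a Hilbert-type bilinear estimate. The first step is to observe that the Farey fractions $v/q$ with $1 \le v \le q \le Q$ and $\gcd(v,q)=1$ are pairwise $Q^{-2}$-separated modulo $1$: for any two distinct such fractions,
$$
\left|\frac{v}{q} - \frac{v'}{q'}\right| = \frac{|vq' - v'q|}{qq'} \ge \frac{1}{qq'} \ge \frac{1}{Q^2},
$$
since $vq' - v'q$ is a nonzero integer. Hence it is enough to prove the following general statement: for any points $\alpha_1, \ldots, \alpha_R \in \R/\Z$ that are pairwise $\delta$-separated,
$$
\sum_{r=1}^R |T(\alpha_r)|^2 \ll (T + \delta^{-1}) A,
$$
and then apply it with $\delta = Q^{-2}$ and the Farey fractions playing the role of the $\alpha_r$.

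To prove the $\delta$-separated form, I would invoke $\ell^2$ duality, which turns the claim into the equivalent statement
$$
\sum_{n=1}^T \left| \sum_{r=1}^R b_r e(-n\alpha_r) \right|^2 \ll (T + \delta^{-1}) \sum_{r=1}^R |b_r|^2
$$
for arbitrary complex numbers $b_1, \ldots, b_R$. Expanding the square and switching the order of summation, the diagonal $r=r'$ contributes exactly $T \sum_r |b_r|^2$, while the off-diagonal part becomes
$$
\sum_{r \ne r'} b_r \overline{b_{r'}} \sum_{n=1}^T e\bigl(n(\alpha_{r'} - \alpha_r)\bigr),
$$
and each inner geometric sum is $O(1/\|\alpha_{r'} - \alpha_r\|)$, where $\|\cdot\|$ denotes distance to the nearest integer.

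The main obstacle is then controlling the resulting bilinear form
$$
\sum_{r \ne r'} \frac{b_r \overline{b_{r'}}}{\|\alpha_r - \alpha_{r'}\|} \ll \delta^{-1} \sum_r |b_r|^2,
$$
which is precisely the Montgomery--Vaughan sharp form of Hilbert's inequality on the circle. This is the genuinely analytic step, where the constant $\delta^{-1}$ is produced; I would either quote it directly or derive it via a Selberg-type extremal majorant of compact Fourier support. Granting this bound, combining the diagonal contribution $T$ with $\delta^{-1}$ gives the dual inequality, and unwinding the duality together with the Farey separation $\delta = Q^{-2}$ yields $(Q^2 + T)A$, completing the proof.
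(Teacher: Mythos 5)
First, a point of comparison: the paper does not prove this lemma at all --- it is quoted directly as the classical large sieve inequality from Iwaniec--Kowalski, Theorem~7.11 --- so there is no in-paper argument to measure yours against, and your proposal has to stand on its own.

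Your architecture (Farey fractions are $Q^{-2}$-spaced modulo $1$, reduce to a well-spaced-points inequality, pass to the dual form, split into diagonal and off-diagonal) is the standard Montgomery--Vaughan route, and those steps are all correct. The gap is at the key analytic step. Once you bound the inner geometric sum by $O(1/\|\alpha_{r'}-\alpha_r\|)$ you have discarded its phase, and the inequality you then need,
$$
\Bigl|\sum_{r \ne r'} \frac{b_r \overline{b_{r'}}}{\|\alpha_r - \alpha_{r'}\|}\Bigr| \ll \delta^{-1} \sum_{r} |b_r|^2,
$$
is \emph{not} the Montgomery--Vaughan Hilbert inequality and is in fact false: for $R$ equally spaced points $\alpha_r = r/R$ (so $\delta = 1/R$) and $b_r = 1$, the left-hand side is of order $R^2\log R = \delta^{-1}\log(1/\delta)\sum_r|b_r|^2$. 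The Montgomery--Vaughan inequality concerns the \emph{antisymmetric} kernels $1/(\lambda_r-\lambda_s)$ and $\csc(\pi(\alpha_r-\alpha_s))$, and its $\delta^{-1}$ bound depends essentially on the cancellation coming from the sign; with the positive kernel $1/\|\cdot\|$ a factor $\log(1/\delta)\asymp \log Q$ is unavoidable, which would yield $(T+Q^2\log Q)A$ --- genuinely weaker than the stated $(T+Q^2)A$ when $T$ is small. The standard repair must be made \emph{before} taking absolute values: evaluate the geometric sum in closed form as a unimodular phase times $\sin(\pi T(\alpha_{r'}-\alpha_r))/\sin(\pi(\alpha_{r'}-\alpha_r))$, expand the numerator sine into two exponentials, absorb all phases into new coefficients $c_r$ with $|c_r|=|b_r|$, and only then apply the generalized Hilbert inequality to the two resulting bilinear forms with kernel $\csc(\pi(\alpha_r-\alpha_{r'}))$. (Alternatively, the Selberg-majorant and Gallagher approaches you allude to prove the well-spaced inequality directly and never pass through the absolute-value kernel.) With that correction your argument goes through.
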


\section{Bounds of character sums for almost all moduli}
\label{sec:bound char}

Garaev~\cite{Gar1},  has obtained a series of improvements
of the bound~\eqref{eq:PVB}  which hold  for almost all 
moduli integer $q\ge 1$.
Namely, by~\cite[Theorem~10]{Gar1}, for any $\delta < 1/4$ if $h$ and $Q$
tend to infinity in such a way that
$$
\frac{\log h}{\sqrt{\log Q}}\to \infty
$$
then the bound
$$
\max_{\chi\in \cX_q^*}   \left|\sum_{n=1}^h \chi(n) \right|\le h^{1-\delta}
$$
holds for all but at most  $Q^{4\delta} h^{(1-2\delta) \gamma +o(1)}$
moduli $q \le Q$, where $\gamma$ is the following fractional parts  
\begin{equation}
\label{eq:gamma h}
\gamma = \left\{\frac{2\log Q}{\log h}\right\}.
\end{equation}

 Here we give some modifications
of the bounds from~\cite{Gar1} which are more convenient for our applications.
In particular, the size of the exceptional set in~\cite[Theorem~10]{Gar1}
of moduli $q \le Q$ for which 
depends on the fractional part $\gamma$.

One can simply estimate $\gamma \le 1$ and still derive a
nontrivial bound $O(Q^{4\delta} h^{1-2\delta})$
from~\cite[Theorem~10]{Gar1}. However here we
show that one can modified the argument of Garaev~\cite{Gar1}
and obtain a stronger bound than that corresponds to replacing $\gamma$
with 1. We also show that the argument of~\cite{Gar1} augmented by
some standard techniques, can be
used to estimate the largest values of sums  $|S_q(\chi;h)|$
uniformly over all integers $h \le H$ and $\chi\in \cX_q^*$, which is important for some
applications. 

We now define $\gamma$   by the analogue of~\eqref{eq:gamma h} but with $H$ 
instead of $h$, that is, 
\begin{equation}
\label{eq:gamma H}
\gamma = \left\{\frac{2\log Q}{\log H}\right\}.
\end{equation}

\begin{lemma}
\label{lem:Almost all qH} Let $H$ and $Q$  be sufficient large positive 
integer numbers with $Q \ge H\ge Q^\varepsilon$ for some fixed $\varepsilon > 0$
and let $\cA=(a_n)_{n\in\mathbb{N}}$ be an arbitrary sequence of complex numbers
with $|a_n|=1$. 
Then for any $\delta< 1/4$ the bound
$$
\max_{\chi\in \cX_q^*} \max_{h \le H}  \left|S_q(\chi;h;\cA) \right|\le H^{1-\delta}
$$
holds true for all but at most 
$Q^{4\delta} H^{\vartheta+ o(1)}$ moduli $q \le Q$,
where $\gamma$ is given by~\eqref{eq:gamma H} and
$\vartheta = \min\{ (1-2 \delta) \gamma, 2 \delta(1-\gamma)\}$
\end{lemma}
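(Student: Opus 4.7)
Denote by $\cQ \subseteq \{1,\dots,Q\}$ the exceptional set of moduli $q\le Q$ for which the bound fails, and for each $q\in\cQ$ fix a witnessing primitive character $\chi_q\in\cX_q^*$ and length $h_q\le H$ with $|S_q(\chi_q;h_q;\cA)|>H^{1-\delta}$. For an integer parameter $k\ge 1$ to be chosen at the end, the aim is to prove the intermediate estimate
$$
|\cQ|\cdot H^{2k(1-\delta)} \ll (Q^2+H^k)\, H^{k+o(1)},
$$
from which a two-case optimization in $k$ delivers the claimed bound on $|\cQ|$.

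The first step is a completion argument to remove the $\max_{h\le H}$: expanding $\mathbf{1}_{[1,h_q]}(n)$ as a discrete Fourier series modulo $H$ with coefficients $c_j(h_q)$ satisfying the standard bound $\sum_j|c_j(h_q)|\ll \log H$, and then H\"older's inequality on the $j$-sum, yield
$$
|S_q(\chi_q;h_q;\cA)|^{2k} \ll (\log H)^{2k-1} \sum_{|j|<H/2} |c_j(h_q)|\cdot |S_q(\chi_q;H;\cA^{(j)})|^{2k},
$$
where $\cA^{(j)}=(a_n\,\e(jn/H))_n$ retains the property $|a_n^{(j)}|=1$. The hypothesis $H\ge Q^\varepsilon$ will force $k=O(1/\varepsilon)$ at optimum, so the factor $(\log H)^{2k}$ is absorbed into the eventual $H^{o(1)}$.

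The second step controls $\sum_{q\le Q}\sum_{\chi^*\bmod q}|S_q(\chi;H;\cB)|^{2k}$ for any unit-coefficient sequence $\cB=(b_n)$. Writing $S_q(\chi;H;\cB)^k=\sum_{n\le H^k} b'_n\chi(n)$ with $b'_n=\sum_{n_1\cdots n_k=n,\,n_i\le H}b_{n_1}\cdots b_{n_k}$, one has $|b'_n|\le\tau_k(n)$ and hence the standard moment estimate $\sum_{n\le H^k}\tau_k(n)^2\ll H^{k+o(1)}$. The Gauss sum identity (Lemma~\ref{lem:tau chi}) together with $|\tau_q(\chi)|^2=q$ (Lemma~\ref{lem:tau size}), orthogonality of characters modulo $q$, and the additive large sieve (Lemma~\ref{lem:Large Sieve}) summed over $q\le Q$ produce the multiplicative large sieve bound
$$
\sum_{q\le Q}\sum_{\chi^*\bmod q} |S_q(\chi;H;\cB)|^{2k} \ll (Q^2+H^k)\, H^{k+o(1)}.
$$
Applying this with $\cB=\cA^{(j)}$ uniformly in $j$, summing the inequality of Step~1 over $q\in\cQ$, and exploiting $\sum_j|c_j(h_q)|\ll\log H$ yields the target intermediate inequality.

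Finally, write $2\log Q/\log H=K+\gamma$ with $K=\lfloor 2\log Q/\log H\rfloor\ge 2$ and $\gamma\in[0,1)$. The choice $k=K$ lies in the regime $Q^2\gg H^k$ and yields $|\cQ|\ll Q^{4\delta}H^{(1-2\delta)\gamma+o(1)}$, whereas $k=K+1$ lies in the regime $H^k\gg Q^2$ and yields $|\cQ|\ll Q^{4\delta}H^{2\delta(1-\gamma)+o(1)}$; taking the smaller of the two delivers $\vartheta=\min\{(1-2\delta)\gamma,\,2\delta(1-\gamma)\}$. The main obstacle I anticipate is the uniform completion step: since the witness $h_q$ depends on $q$, a naive union bound over $h\le H$ would cost a factor $H$ and completely destroy the estimate, so one must exploit the small $\ell^1$-norm of the Fourier coefficients of $\mathbf{1}_{[1,h_q]}$ to restrict the loss to the polylogarithmic $(\log H)^{2k}=H^{o(1)}$.
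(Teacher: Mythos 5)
Your proposal is correct and follows essentially the same route as the paper's proof: completion of the sum modulo $H$ with $\ell^1$-bounded Fourier coefficients to handle the $q$-dependent lengths $h_q$, H\"older's inequality, conversion to additive characters via Gauss sums, the large sieve with divisor-function control of the coefficients, and the final two-case choice $\nu=k$ versus $\nu=k+1$ with $k=\lfloor 2\log Q/\log H\rfloor$. The only cosmetic difference is that you absorb the twist $\e(jn/H)$ into the coefficient sequence rather than carrying it explicitly, which changes nothing.
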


\begin{proof} As we have mentioned, we follow the ideas of 
Garaev~\cite[Theorem~3]{Gar1}.

Without loss of generality we may assume that $H=2M+1$ is an
odd integer.
We also define the function $e(z) = \exp(2 \pi i z)$.
We recall, that for any integer $z$,  we have
the orthogonality relation
\begin{equation}
\label{eq:Orth}
\sum_{b=-M}^M \e(bz/H) = \left\{\begin{array}{ll}
H,&\quad\text{if $z\equiv 0 \pmod H$,}\\
0,&\quad\text{if $z\not\equiv 0 \pmod H$,}
\end{array}
\right.
\end{equation}
see~\cite[Section~3.1]{IwKow}.
We also need the bound
\begin{equation}
\label{eq:Incompl}
\sum_{n=u+1}^{u+h} \e(bn/H) \ll  \frac{H}{|b|+1},
\end{equation}
which holds for any integers  $b$, $u$ and $H\ge h\ge 1$ with $|b| \le H/2$,
see~\cite[Bound~(8.6)]{IwKow}.

Now for each $q \le Q$ we fix  $\chi_q \in \cX_q^*$ and  $h_q\le H$
with
$$
 \left|S_q(\chi_q;h_q;\cA)  \right|
= \max_{\chi\in \cX_q^*} \max_{h \le H} \left|S_q(\chi;h;\cA)\right|.
$$
Then using~\eqref{eq:Orth},  we write
\begin{eqnarray*}
S_q(\chi_q;h_q;\cA)  &=&
 \sum_{r=1}^H  a_r \chi_q(r)\frac{1}{H} \sum_{n=1}^{h_q} \sum_{b=-M}^{M} \e(b(r-n)/H) \\
&=& \frac{1}{H} \sum_{b=-M}^{M}   \sum_{n=1}^{h_q} \e(-bn/H)
 \sum_{r=1}^H   a_r \chi_q(r) \e(br/H).
\end{eqnarray*}
Recalling~\eqref{eq:Incompl}, we see that
$$
S_q(\chi_q;h_q;\cA) \ll
 \sum_{b=-M}^{M} \frac{1}{|b|+1}
\left|\sum_{r=1}^H   a_r \chi_q(r) \e(br/H)\right| .
$$
Writing
$$
|b|+1 = \(|b|+1\)^{(2\nu-1)/2\nu} \(|b|+1\)^{1/2\nu},
$$
and using the H{\"o}lder inequality, we derive
 \begin{equation}
\label{eq:Ub}
\sum_{q \le Q} \left|S_q(\chi_q;h_q;\cA)\right|^{2\nu} \ll
(\log Q)^{2\nu-1} \sum_{b=-M}^{M} \frac{1}{|b|+1}  U_b,
\end{equation}
where
$$
U_b =  \sum_{q \le Q} \left|\sum_{r=1}^H   a_r \chi_q(r) \e(br/H)\right|^{2\nu}.
$$
We now note that
$$
\(\sum_{r=1}^H   a_r \chi_q(r) \e(br/H)\)^\nu = \sum_{n=1}^{T} \rho_{b}(n)   \chi_q(n) ,
$$
 where $T = H^\nu$ and
$$
\rho_{b}(n) = \sum_{\substack{r_1,\ldots, r_{\nu}=1\\ r_1\ldots r_{\nu} = n}}^H
a_{r_1} \ldots a_{r_\nu}
\e(b( r_1+\ldots+ r_\nu)/H).
$$
Using Lemma~\ref{lem:tau chi}, we  write
\begin{align*}
\(\sum_{r=1}^H   a_r \chi_q(r) \e(br/H)\)^\nu &= \sum_{n=1}^{T} \rho_{b}(n)
 \frac{1}{\tau_{q}( \overline \chi_q)}\sum_{\substack{v=1\\ \gcd(v,q) =1}}^{q}
  \overline \chi_q(v) \e(nv/q)\\
&= \sum_{\substack{v=1\\ \gcd(v,q)=1}}^q \frac {\overline \chi_q(v)}{\tau_{q}(\overline \chi_q)} 
\sum_{n=1}^T \rho_b(n) e(nv/q).
\end{align*}
Changing the order of summation,
by Lemma~\ref{lem:tau size} and the Cauchy inequality, we obtain,
$$
\left|\sum_{r=1}^H  \chi_q(r) \e(br/H)\right|^{2\nu} \le
\sum_{\substack{v=1\\ \gcd(v,q) =1}}^{q}  \left| \sum_{n=1}^{T} \rho_{b}(n) \e(nv/q)\right|^2.
$$
Therefore,
$$
U_b \le   \sum_{q \le Q} \sum_{\substack{v=1\\ \gcd(v,q) =1}}^{q} \left| \sum_{n=1}^{T} \rho_{b}(n) \e(nv/q)\right|^2.
$$
Recalling the well-known upper bound on the divisor function $d(n)$, 
see~\cite[Bound~(1.81)]{IwKow}, 
we conclude that
$$
|\rho_{b}(n)| \le  \sum_{\substack{r_1, \ldots, r_\nu =1\\r_1 \ldots r_\nu=n}}^H 1
\le (d(n))^\nu =  n^{o(1)}
$$
as $n \to \infty$. Thus
$$
\sum_{n=1}^T |\rho_b(n)|^2 \le T^{o(1)} \sum_{n=1}^T |\rho_b(n)| \le T^{o(1)} H^\nu = H^{\nu(1+o(1))}.
$$
Hence, we now derive from Lemma~\ref{lem:Large Sieve}
$$
U_b\le  \(Q^2+ T \) \sum_{n=1}^T |\rho_b(n)|^2 \le \(Q^2+ H^\nu \) H^{\nu(1+o(1))},
$$
which after substitution in~\eqref{eq:Ub} implies
\begin{equation}
\label{eq:bound nu}
\sum_{q \le Q}  \max_{\chi\in \cX_q^*} \max_{h \le H}  \left|S_q(\chi;h;\cA)\right|^{2\nu} \le
\(Q^2+ H^\nu \) H^{\nu(1+o(1))}.
\end{equation}

 We now define
 the integer $k$ by
 $$
k = \fl{\frac{2\log Q}{\log H}}.
$$
Note that
$$
Q^2 = H^{k+\gamma}.
$$
Using~\eqref{eq:bound nu} with $\nu=k$ (so $\nu<2/\epsilon$ in particular) we see that
$$
\sum_{q \le Q}  \max_{\chi\in \cX_q^*} \max_{h \le H}  \left|S_q(\chi;h;\cA)\right|^{2k} \le
Q^{2} H^{k+o(1)}.
$$
Hence the desired bound holds
for all but at most
\begin{equation}
\label{eq:small gamma}
\begin{split}
Q^{2} H^{k+o(1)} H^{-2k(1-\delta)}& = Q^{2} H^{-k(1-2\delta)+o(1)}
= H^{2k\delta + \gamma+o(1)}\\
& = Q^{4\delta } H^{(1-2 \delta) \gamma+o(1)}
\end{split}
\end{equation}
moduli $q \le Q$ (which is essentially a bound
of the same strength as that of~\cite[Theorem~10]{Gar1}).

Furthermore, using~\eqref{eq:bound nu} with $\nu=k+1$ we see that
$$
\sum_{q \le Q}  \max_{\chi\in \cX_q^*} \max_{h \le H}  \left|S_q(\chi;h;\cA)\right|^{2(k+1)} \le
H^{2(k+1)+o(1)} .
$$
Hence the desired  bound  holds
for all but at most
\begin{equation}
\label{eq:big gamma}
H^{2(k+1)+o(1)} H^{-2(k+1)(1-\delta)} = H^{2(k+1)\delta +o(1)}\\
 = Q^{4\delta} H^{2 \delta(1-\gamma)+ o(1)}
\end{equation}
moduli $q \le Q$.

The bounds~\eqref{eq:small gamma} and~\eqref{eq:big gamma}
yield  the result.
\end{proof}

Covering the interval $[1,H]$ by $O(\log H)$ dyadic intervals of the form 
$[H_0/2, H_0]$, and using that 
$$
\min\{ (1-2 \delta) \gamma, 2 \delta(1-\gamma)\}  \le 2\delta(1-2\delta),
$$
we obtain:

\begin{cor}
\label{cor:Almost all qhH} Let $H$ and $Q$  be sufficient large positive 
integer numbers with $Q \ge H\ge Q^\varepsilon$ for some fixed $\varepsilon > 0$ 
and let $\cA=(a_n)_{n\in\mathbb{N}}$ be an arbitrary sequence of complex numbers
with $|a_n|=1$.
Then for any $\delta< 1/4$ the bound
$$
\max_{\chi\in \cX_q^*}   \left|S_q(\chi;h;\cA)  \right|\le h^{1-\delta}
$$
holds true for all $h \le H$ and for all but at most 
$Q^{4\delta} H^{2\delta(1-2\delta)+ o(1)}$ moduli $q \le Q$.
\end{cor}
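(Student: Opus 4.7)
The plan is to obtain the corollary as a consequence of Lemma \ref{lem:Almost all qH} via the dyadic decomposition flagged in the hint. I would cover $[1,H]$ by the $J = O(\log H) = H^{o(1)}$ dyadic intervals $(H_j/2,H_j]$ with $H_j = 2^j$. At each admissible scale $H_j \ge Q^\varepsilon$, apply Lemma \ref{lem:Almost all qH} with $H$ replaced by $H_j$: writing $\gamma_j = \{2\log Q/\log H_j\}$, this produces an exceptional set $\cE_j \subseteq \{q \le Q\}$ of size at most $Q^{4\delta} H_j^{\vartheta_j + o(1)}$, where $\vartheta_j = \min\{(1-2\delta)\gamma_j,\,2\delta(1-\gamma_j)\}$, such that $\max_{\chi \in \cX_q^*} \max_{h \le H_j} |S_q(\chi;h;\cA)| \le H_j^{1-\delta}$ whenever $q \notin \cE_j$.

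The decisive observation is the elementary inequality $\min\{(1-2\delta)\gamma,\,2\delta(1-\gamma)\} \le 2\delta(1-2\delta)$, valid uniformly for $\gamma \in [0,1]$ and $\delta \in (0,1/4)$. This is a one-line check: as functions of $\gamma$, the two linear expressions cross precisely at $\gamma = 2\delta$, where they both equal $2\delta(1-2\delta)$, and one of them is strictly smaller off this point. Consequently $\#\cE_j \le Q^{4\delta} H_j^{2\delta(1-2\delta)+o(1)} \le Q^{4\delta} H^{2\delta(1-2\delta)+o(1)}$ uniformly in $j$, and taking $\cE = \bigcup_j \cE_j$ with the union bound (using $J = H^{o(1)}$) yields $\#\cE \le Q^{4\delta} H^{2\delta(1-2\delta)+o(1)}$, matching the required count of exceptional moduli.

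For $q \le Q$ with $q \notin \cE$ and any $h \le H$, I would locate the $j$ with $H_j/2 < h \le H_j$ and read off $|S_q(\chi;h;\cA)| \le H_j^{1-\delta} \le (2h)^{1-\delta}$ for every $\chi \in \cX_q^*$. The stray constant $2^{1-\delta}$ is absorbed by running the whole argument at a slightly larger $\delta' \in (\delta,1/4)$ and then letting $\delta' \to \delta$, which only perturbs the exceptional-set exponent within the $o(1)$ tolerance. The finitely many small dyadic scales with $H_j < Q^\varepsilon$ contribute negligibly: for those $h$, the Burgess bound \eqref{eq:PVB} (with $\nu$ chosen large) applies to \emph{every} $q$ and already delivers $|S_q(\chi;h;\cA)| \le h^{1-\delta}$.

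I do not anticipate a real obstacle: the single substantive step is the elementary optimisation $\min\{(1-2\delta)\gamma,\,2\delta(1-\gamma)\} \le 2\delta(1-2\delta)$, which is what converts the $\gamma$-dependent Lemma \ref{lem:Almost all qH} into a $\gamma$-free statement. Everything else---the dyadic covering, the union bound over $H^{o(1)}$ scales, and the absorption of the constant $2^{1-\delta}$---is routine bookkeeping of the kind that appears repeatedly in this circle of ideas.
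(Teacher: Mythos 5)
Your main argument is exactly the paper's proof, which consists of the single sentence preceding the corollary: cover $[1,H]$ by $O(\log H)$ dyadic scales, apply Lemma~\ref{lem:Almost all qH} at each scale, and use the elementary bound $\min\{(1-2\delta)\gamma,\,2\delta(1-\gamma)\}\le 2\delta(1-2\delta)$ (your computation of the crossing point $\gamma=2\delta$ is correct) together with a union bound over the $H^{o(1)}$ scales. Your handling of the constant $2^{1-\delta}$ is more careful than the paper's (which ignores it); just note that the auxiliary exponent must be taken as $\delta'=\delta+O(1/\log Q)$ rather than a fixed $\delta'>\delta$, since otherwise $Q^{4\delta'}$ exceeds $Q^{4\delta}$ by a positive power of $Q$ that the $o(1)$ in the $H$-exponent cannot absorb.

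The one genuine misstep is your treatment of the small scales $H_j<Q^{\varepsilon}$ via the Burgess bound~\eqref{eq:PVB}: that bound applies only to the unweighted sums $S_q(\chi;h)$, not to $S_q(\chi;h;\cA)$ with arbitrary unimodular weights $a_n$, and the paper itself stresses this point in its Comments section (``Our proof of Corollary~\ref{cor:Almost all qh} uses~\eqref{eq:PVB} \dots and thus does not extend to more general weighted sums''). Indeed no such patch is possible: for bounded $h$ (say $h=2$) the claimed inequality $|a_1\chi(1)+a_2\chi(2)|\le 2^{1-\delta}$ fails for a positive proportion of moduli, so the corollary must be read with $h$ bounded below by a power of $Q$ --- which is how it is stated in, and all that is needed for, the application to Corollary~\ref{cor:Almost all qh}. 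Restricting your dyadic scales to $H_j\ge Q^{\varepsilon}$ and dropping the Burgess appeal removes the problem and recovers the paper's argument.
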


%
%

%
%

For the traditional character sums, that is, if $a_n=1$,
we also have the following result.

\begin{cor}
\label{cor:Almost all qh} Let $Q$  be a sufficient large positive 
integer number. For any fixed $\varepsilon > 0$ and $3/14 > \delta > 0$,
there is some $\xi> 0$ such that the bound
$$
\max_{\chi\in \cX_q^*}   \left|S_q(\chi;h) \right|\le h^{1-\delta}
$$
holds true for all $h \in [Q^\varepsilon, Q]$ and for all but at most 
$Q^{1-\xi}$ moduli $q \le Q$.
\end{cor}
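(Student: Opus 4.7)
The plan is to split $h\in[Q^{\varepsilon},Q]$ at a threshold $Q^{\tau_1}$, handling the upper range by a uniform Polya--Vinogradov bound (producing no exceptional moduli) and the lower range by applying Lemma~\ref{lem:Almost all qH} dyadically.

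First, I would set $\tau_1=1/(2(1-\delta))$, so that \eqref{eq:PVB} with $\nu=1$ yields, uniformly over $q\le Q$ and $\chi\ne\chi_0$,
$$
|S_q(\chi;h)|\le q^{1/2+o(1)}\le Q^{1/2+o(1)}\le h^{1-\delta} \qquad (h\ge Q^{\tau_1}).
$$
Since $\delta<3/14<1/4$, one gets $\tau_1<2/3$, a geometric inequality that will be the key input for the lower range. On $[Q^{\tau_1},Q]$ there are therefore no exceptional moduli at all.

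Next, I would cover $[Q^{\varepsilon},Q^{\tau_1}]$ by $O(\log Q)$ dyadic blocks $[H,2H]$ and apply Lemma~\ref{lem:Almost all qH} to each with parameter $H$ (working with a slightly smaller $\delta'<\delta$ to absorb the constant factor $2^{1-\delta'}$ when passing from $H^{1-\delta'}$ to $h^{1-\delta}$). For each such $H$ this produces an exceptional set of size at most $Q^{4\delta+\tau\vartheta(\tau)+o(1)}$, where $\tau=\log H/\log Q$, $\gamma=\{2/\tau\}$ and $\vartheta(\tau)=\min\{(1-2\delta)\gamma,\,2\delta(1-\gamma)\}$, outside of which $|S_q(\chi;h)|\le h^{1-\delta}$ holds for every $h\in[H,2H]$.

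The main obstacle, and the precise source of the constant $3/14$, is to verify that the resulting exponent $4\delta+\tau\vartheta(\tau)$ stays uniformly strictly below $1$ over the dyadic parameters $\tau\in[\varepsilon,\tau_1]$. Writing $k=\lfloor 2/\tau\rfloor$, the bound $\tau\le\tau_1<2/3$ forces $k\ge 3$. For fixed $k$, elementary optimization of $4\delta+\tau\vartheta(\tau)$ on $\tau\in[2/(k+1),2/k]$ shows the maximum occurs at the breakpoint $\gamma=2\delta$, i.e.\ $\tau=2/(k+2\delta)$, with value $4\delta(k+1)/(k+2\delta)$. Since this quantity is decreasing in $k$, its supremum over $k\ge 3$ is achieved at $k=3$ and equals $16\delta/(3+2\delta)$; the inequality $16\delta/(3+2\delta)<1$ rearranges to exactly $\delta<3/14$, which is our hypothesis. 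Summing the $O(\log Q)$ dyadic exceptional sets then yields a total of size at most $Q^{16\delta/(3+2\delta)+o(1)}\le Q^{1-\xi}$ for some $\xi=\xi(\delta)>0$, which combined with the Polya--Vinogradov bound on the upper range completes the proof.
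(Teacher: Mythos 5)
Your proof is correct and follows the same basic strategy as the paper's: the P\'olya--Vinogradov bound disposes of $h\ge Q^{1/(2(1-\delta))}$ with no exceptional moduli, and Lemma~\ref{lem:Almost all qH} is applied dyadically below that threshold. The difference is in how the lower range is organized. The paper splits it at an auxiliary exponent $Q^\alpha$ with $\alpha>1/2$, invoking Corollary~\ref{cor:Almost all qhH} (i.e.\ the crude estimate $\vartheta\le 2\delta(1-2\delta)$) on $[Q^\varepsilon,Q^\alpha]$ and the sharp form of the lemma (where necessarily $k=3$) on $[Q^\alpha,Q^{1/(2(1-\delta))}]$; the simultaneous solvability in $\alpha$ of the two resulting constraints is exactly the condition $\delta<3/14$. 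You instead optimize $4\delta+\tau\vartheta(\tau)$ globally over all dyadic blocks, locate the worst case at $k=3$, $\gamma=2\delta$, and obtain the exponent $16\delta/(3+2\delta)$; this is a cleaner route to the same threshold, makes the provenance of $3/14$ transparent, and avoids the auxiliary parameter $\alpha$. One small correction: to pass from the lemma's conclusion $\max_{h\le H'}|S_q(\chi;h)|\le (H')^{1-\delta'}$ on a block $[H'/2,H']$ to the desired $|S_q(\chi;h)|\le h^{1-\delta}$, you must take $\delta'$ slightly \emph{larger} than $\delta$, not smaller: then $(H')^{1-\delta'}\le (2h)^{1-\delta'}\le 2\,h^{-(\delta'-\delta)}h^{1-\delta}\le h^{1-\delta}$ once $h\ge Q^{\varepsilon}$ and $Q$ is large, whereas with $\delta'<\delta$ the inequality goes the wrong way. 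Since $16\delta'/(3+2\delta')<1$ persists for $\delta'$ slightly above any $\delta<3/14$, this does not affect the conclusion.
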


\begin{proof} Clearly, it is enough to consider only $q\in [Q/2, Q]$.
 Let us fix some positive $\delta$ with $(3-\sqrt{7})/2 < \delta < 3/14$.
Simple calculus shows that there is some $\alpha> 1/2$ such that 
$$
4\delta +2\alpha \delta(1-2\delta) < 1
\mand
4\delta +(2-3\alpha) (1-2\delta) < 1.
$$
We now note that with the above parameters,  Corollary~\ref{cor:Almost all qhH},
used with $H = \rf{Q^\alpha}$, 
implies that it remains 
to establish the results only for the values of $h \in [Q^\alpha, Q]$.

Furthermore, by   the P{\'o}lya-Vinogradov   bound (that is, by~\eqref{eq:PVB}
taken with $\nu =1$) we have
$$
\max_{\chi\in \cX_q^*}   \left|S_q(\chi;h) \right|\le h^{1-\delta}
$$
holds for any $h \ge Q^{1/2(1-\delta)}$ and $q \le Q$.

Therefore, we only need to consider the values of $h$ 
in the interval $[Q^\alpha, Q^{1/2(1-\delta)}]$, which we
can cover by $O(\log Q)$ dyadic intervals $[H/2, H]$.
Now, for  $H \in [Q^\alpha, Q^{1/2(1-\delta)}]$ we have
$$
3 < 4(1 -\delta) \le \frac{2\log Q}{\log H} \le 2 \alpha^{-1} < 4.
$$
Hence, writing $H = Q^\beta$, for the parameter  $\gamma$, 
that is given by~\eqref{eq:gamma H}, we have
$$
\gamma =2\beta^{-1}-3. 
$$
Recalling Lemma~\ref{lem:Almost all qH}, we see that it remains to check that 
$$
4\delta + \beta \min\{(2\beta^{-1}-3)(1-2 \delta), 2 (4-2\beta^{-1})\delta\}<1
$$ 
for every $\beta\in [\alpha, 1/2(1-\delta)]$.
We now have the following elementary estimates
\begin{equation*}
\begin{split}
4\delta  + \beta &\min\{(2\beta^{-1}-3)(1-2 \delta), 2 (4-2\beta^{-1})\delta\}\\
& = 4\delta + \beta (2\beta^{-1}-3)(1-2 \delta)
=  4\delta +  (2-3\beta)(1-2 \delta)\\
& 
\le   4\delta +  (2-3\alpha)(1-2 \delta) < 1
\end{split}
\end{equation*}
and the result follows. 
\end{proof}

\section{Background from Additive Combinatorics}

We use standard notation of additive combinatorics,
including sumsets $\cA+\cB = \{a+b~:~a \in \cA,\ b \in \cB\}$ 
and $k$-folded sumsets $k\cA = \{a_1+\ldots+a_k~:~a_1,\ldots,a_k\in \cA\}$, 
assuming that $\cA$ and $\cB$ are subsets of some
abelian group $\cG$. 

We first recall the {\it Pl{\"u}nnecke inequality\/}, 
see~\cite[Corollary~6.29]{TaoVu}.

\begin{lemma}
\label{lem:PlunIneq}
Suppose that $\cA$ and $\cB$ are subsets of some
abelian group $\cG$, and that $\#(\cA +\cB) \le K\#\cA$ 
for some $K \ge 1$. Then for any nonnegative integers $k$ and 
$m$ we have
$$
\#(k\cB - m\cB) \le K^{k+m}\#\cA.
$$ 
\end{lemma}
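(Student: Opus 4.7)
The plan is to follow Petridis's streamlined proof of the Plünnecke--Ruzsa inequality, which factors the bound through (i) a compression-style key lemma along a carefully chosen subset of $\cA$, and (ii) the Ruzsa triangle inequality to pass from iterated sumsets to iterated difference sets. As a preparatory step I would choose a nonempty $\cA' \subseteq \cA$ minimizing the ratio
$$
K' = \frac{\#(\cA' + \cB)}{\#\cA'},
$$
which satisfies $K' \le K$ by hypothesis and, by the choice of $\cA'$, cannot be improved on any nonempty subset of $\cA'$.

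The heart of the argument is Petridis's key lemma: for every finite $\cC \subseteq \cG$,
$$
\#(\cA' + \cB + \cC) \le K'\,\#(\cA' + \cC).
$$
I would prove this by induction on $\#\cC$, the base case $\#\cC = 1$ being immediate from the definition of $K'$. For the inductive step, strip off one element $z$ from $\cC$ to obtain $\cC_0 := \cC \setminus \{z\}$, and decompose
$$
\#(\cA' + \cB + \cC) = \#(\cA' + \cB + \cC_0) + \#(\cA' + \cB) - \#\bigl((\cA' + \cB + z) \cap (\cA' + \cB + \cC_0)\bigr).
$$
Setting $\cA'' := \{a \in \cA' : a + \cB + z \subseteq \cA' + \cB + \cC_0\}$, the intersection above contains $\cA'' + \cB + z$, so the minimality of $K'$ on $\cA''$ gives that it has size at least $K'\,\#\cA''$. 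A parallel inclusion argument, exploiting the observation that $a + z \in \cA' + \cC_0$ already forces $a \in \cA''$, yields $\#(\cA' + \cC) \ge \#(\cA' + \cC_0) + \#\cA' - \#\cA''$. Combining these two estimates with the inductive hypothesis applied to $\cC_0$ closes the induction after rearrangement.

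Iterating the key lemma with $\cC = (n-1)\cB$ then yields $\#(\cA' + n\cB) \le (K')^n\,\#\cA' \le K^n\,\#\cA$ for every nonnegative integer $n$. Finally, I would invoke the Ruzsa triangle inequality in the sum form
$$
\#\cX \cdot \#(\cY - \cZ) \le \#(\cX + \cY) \cdot \#(\cX + \cZ),
$$
which holds in any abelian group (it follows from the standard difference version $\#\cX \cdot \#(\cY - \cZ) \le \#(\cX - \cY) \cdot \#(\cX - \cZ)$ by replacing $\cX$ with $-\cX$), applied with $\cX = \cA'$, $\cY = k\cB$, $\cZ = m\cB$. Combined with the iterated bound this gives
$$
\#\cA' \cdot \#(k\cB - m\cB) \le K^k\,\#\cA' \cdot K^m\,\#\cA' = K^{k+m}\,(\#\cA')^2,
$$
and dividing by $\#\cA' > 0$ together with $\#\cA' \le \#\cA$ delivers the stated inequality.

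The main obstacle I anticipate is making the set-theoretic bookkeeping in the inductive step of Petridis's key lemma precise, in particular isolating the correct subset $\cA''$ so that the excess contribution from adjoining $z$ on the left of the key lemma is matched by a parallel excess on the right. Once that combinatorial identity is in place, everything else, including the Ruzsa triangle step, is essentially mechanical.
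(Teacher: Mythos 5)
Your argument is correct and complete in outline, but it is worth pointing out that the paper does not prove this lemma at all: it is quoted verbatim from Tao and Vu~\cite[Corollary~6.29]{TaoVu}, where the proof runs through Pl\"unnecke's original graph-theoretic machinery (layered commutative graphs, magnification ratios, and a Menger-type argument). What you propose instead is Petridis's elementary induction, and every step checks out: the minimizing subset $\cA'$ exists and satisfies $K'\le K$; the base case $\#\cC=1$ of the key lemma is the definition of $K'$; in the inductive step the inclusion-exclusion identity, the lower bound $\#\bigl((\cA'+\cB+z)\cap(\cA'+\cB+\cC_0)\bigr)\ge\#(\cA''+\cB)\ge K'\,\#\cA''$ (trivially valid even when $\cA''=\emptyset$), and the disjointness of $(\cA'\setminus\cA'')+z$ from $\cA'+\cC_0$ fit together exactly as you describe; iterating with $\cC=(n-1)\cB$ gives $\#(\cA'+n\cB)\le (K')^n\,\#\cA'$; and the sum form of the Ruzsa triangle inequality, obtained from the difference form by replacing $\cX$ with $-\cX$, finishes the job, with the degenerate cases $k=0$ or $m=0$ absorbed by $0\cB=\{0\}$. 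The trade-off between the two routes is the usual one: the graph-theoretic proof cited by the paper yields additional structural information (a single subset of $\cA$ working simultaneously for all iterates, via the submultiplicativity of magnification ratios), while your route is shorter, self-contained, and needs nothing beyond inclusion-exclusion and the triangle inequality. One small point of hygiene: as stated the lemma is false for $\cA=\emptyset$ and $\cB\ne\emptyset$, so you should (as Tao--Vu implicitly do) record the standing assumption that $\cA$ is nonempty, which your choice of a nonempty minimizer $\cA'$ already requires.
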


We now record the following obvious  consequence of  Lemma~\ref{lem:PlunIneq}.
\begin{cor}
\label{cor:PowerApproxSubgr}
For any fixed integer $m\ge 1$ and approximate
subgroup  $\cU \subseteq \F_p^*$ we have
$$
\# \cU^{(m)} \le (\# \cU)^{1+o(1)}.
$$
\end{cor}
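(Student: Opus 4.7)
The plan is to apply the Pl{\"u}nnecke inequality (Lemma~\ref{lem:PlunIneq}) directly to the multiplicative abelian group $\F_p^*$. Although Lemma~\ref{lem:PlunIneq} is stated in additive notation, it is valid in any abelian group, so switching the group operation from addition to multiplication turns sumsets into product sets and $k$-fold sumsets $k\cB$ into $k$-fold product sets $\cB^{(k)}$.

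First I would set $\cA = \cB = \cU$ in Lemma~\ref{lem:PlunIneq}. The hypothesis of the lemma then reads $\# \cU^{(2)} \le K \# \cU$, and the defining property of an approximate subgroup tells us that this holds with
$$
K = (\# \cU)^{o(1)}.
$$
Next, I would invoke the conclusion of Lemma~\ref{lem:PlunIneq} (translated multiplicatively) with the "addition" parameter equal to $m$ and the "subtraction" parameter equal to $0$, giving
$$
\# \cU^{(m)} \le K^{m} \# \cU.
$$
Since $m$ is a fixed positive integer and $K = (\# \cU)^{o(1)}$, we have $K^{m} = (\# \cU)^{o(1)}$, so
$$
\# \cU^{(m)} \le (\# \cU)^{1 + o(1)},
$$
which is the claimed bound.

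There is really no serious obstacle here; the only thing to be careful about is the notational translation of Pl{\"u}nnecke's inequality from an additive abelian group to the multiplicative group $\F_p^*$, together with the observation that the exponent $m$ is fixed so that the factor $K^m$ may be absorbed into $(\# \cU)^{o(1)}$ as $\# \cU \to \infty$.
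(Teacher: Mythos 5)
Your proof is correct and is exactly the argument the paper has in mind: the paper simply records the corollary as an ``obvious consequence'' of Lemma~\ref{lem:PlunIneq}, and your application of Pl\"unnecke with $\cA=\cB=\cU$, exponents $(m,0)$, and $K=(\#\cU)^{o(1)}$ absorbed via the fixedness of $m$ is that consequence spelled out. Note also that Lemma~\ref{lem:PlunIneq} as stated requires only an abelian group (no torsion-freeness), so the translation to the multiplicative group $\F_p^*$ is legitimate as you claim.
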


Suppose that $\cA \subseteq \cG$  and $\cB \subseteq \cH$  are subsets of abelian groups
$\cG$ and $\cH$, respectively. A map $\psi: \cA \to \cB$ is called
{\it Freiman $k$-homomorphism\/} if whenever
$$
a_1+\ldots+a_k = a_{k+1} + \ldots + a_{2k}
$$ 
for some $a_1,\ldots,a_{2k}$ then we also have 
$$
\psi(a_1)+\ldots+\psi(a_k) = \psi(a_{k+1}) + \ldots + \psi(a_{2k}).
$$ 
If $\psi$ has an inverse which is also a Freiman $k$-homomorphism 
then we say that  $\psi$ is a
{\it Freiman $k$-isomorphism\/} and also that  $\cA$ and $\cB$ are 
{\it Freiman $k$-isomorphic\/}. 

We note that if $\cG$ is a  torsion-free group 
that considering $a_1=\ldots=a_k = a$ and $a_{k+1}=\ldots =a_{2k}=b$
for some $a,b \in\cA$ we derive that any Freiman $k$-isomorphism
is an injection.

We need the following result of Ruzsa~\cite[Theorem~2.3.5]{Ruz2},  which is 
known as the {\it Modelling Lemma\/} (see also~\cite[Theorem~2]{Ruz1}
for teh case $\cG = \Z$ which is fully sufficient for our 
purposes).

\begin{lemma}
\label{lem:RuzsaModel}
Suppose that  $\cA \subseteq \cG$ 
is a finite nonempty subset of a  torsion-free Abelian group $\cG$. 
 Then for  all integers $k \ge 2$ and  $q \ge |k\cA - k\cA|$ there is a set 
$\cB \subseteq \cA$ with 
$\# \cB \ge \# \cA/k$ such that $\cB$ is Freiman $k$-isomorphic to a subset of 
$\Z/q\Z$.
\end{lemma}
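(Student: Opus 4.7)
The plan is to reduce $\cA$ to a subset of $\Z$, then to a subset of $\Z/p\Z$ for a large auxiliary prime $p$, and finally to map into $\Z/q\Z$ via a random dilation followed by a ``short-lift and reduce'' procedure; a large enough subset $\cB \subseteq \cA$ will survive this process and be Freiman $k$-isomorphic to its image in $\Z/q\Z$.

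First I would exploit that $\cG$ is torsion-free. The subgroup $\langle \cA\rangle \subseteq \cG$ is finitely generated and torsion-free, hence isomorphic to some $\Z^r$. Since $k\cA - k\cA$ is finite, a generic $\Z$-linear functional $L:\Z^r \to \Z$ is injective on $k\cA - k\cA$. A linear map is automatically a Freiman $k$-homomorphism, and injectivity on $k\cA - k\cA$ upgrades $L$ to a Freiman $k$-isomorphism on $\cA$. So I may assume $\cA \subseteq \Z$, and write $D = k\cA - k\cA$.

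Next, choose a prime $p$ larger than $\max_{d \in D}|d|$, so that reduction modulo $p$ is injective on $D$ and hence a Freiman $k$-isomorphism on $\cA$. For $\lambda \in (\Z/p\Z)^*$, consider the map $\phi_\lambda:\Z/p\Z \to \Z/q\Z$ defined by
$$
\phi_\lambda(x) = \bigl(\text{lift of } \lambda x \text{ in } \{0,1,\ldots,p-1\}\bigr) \bmod q,
$$
together with the subset
$$
\cB_\lambda = \{a\in \cA : \widetilde{\lambda a} < p/k\},
$$
where $\widetilde{(\cdot)}$ denotes the canonical lift in $\{0,\ldots,p-1\}$. Averaging over $\lambda$, some $\lambda_0$ satisfies $\#\cB_{\lambda_0} \ge \#\cA/k$. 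For any $a_1,\ldots,a_{2k}\in \cB_{\lambda_0}$, each partial sum $\sum_{i=1}^k \widetilde{\lambda_0 a_i}$ lies in $[0,p)$, so the integer
$$
s = \sum_{i=1}^k \widetilde{\lambda_0 a_i} - \sum_{i=k+1}^{2k} \widetilde{\lambda_0 a_i}
$$
belongs to $(-p,p)$ and satisfies $s \equiv \lambda_0\bigl(\sum_{i\le k}a_i - \sum_{i>k}a_i\bigr)\pmod{p}$. This already forces $\phi_{\lambda_0}$ to be a Freiman $k$-\emph{homomorphism} on $\cB_{\lambda_0}$.

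The main obstacle is upgrading this to a Freiman $k$-\emph{isomorphism}: for each $d\in D\setminus\{0\}$ one must rule out the spurious coincidence $s \equiv 0\pmod{q}$. For fixed nonzero $d$ the set of $\lambda\in(\Z/p\Z)^*$ producing such a coincidence has density at most $2/q$ (governed by the number of multiples of $q$ inside $(-p,p)$), and summing against $\#D\le q$ gives a total ``bad mass'' of $O(1)$. A joint averaging --- or, equivalently, choosing $p$ in a narrow range just above $q$ so that only $\{-q,0,q\}$ are the relevant multiples of $q$ in $(-p,p)$ --- lets one select $\lambda_0$ simultaneously satisfying $\#\cB_{\lambda_0}\ge \#\cA/k$ \emph{and} the absence of bad $d$. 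Arranging the averaging sharply enough to preserve the exact factor $1/k$ (and handling the case when $q$ is composite, where one cannot simply invoke primitive dilations) is the delicate point.
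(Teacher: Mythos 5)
First, a point of comparison: the paper does not prove this lemma at all --- it is quoted from Ruzsa~\cite[Theorem~2.3.5]{Ruz2} as a black box --- so you are attempting to reprove the Modelling Lemma from scratch. Your strategy (reduce to $\Z$ via a generic linear functional injective on $k\cA-k\cA$, pass to $\Z/p\Z$ for a large prime $p$, dilate by a random $\lambda$, restrict to a window of length $p/k$, reduce mod $q$) is the standard proof, and the first steps are correct. However, there is a genuine gap exactly where you flag ``the delicate point'', and neither of your proposed patches works. Your union bound gives total bad mass at most $(\#D-1)\cdot 2/q\le 2(q-1)/q$, which is $O(1)$ but not $<1$, so it does not by itself produce a good $\lambda_0$. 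The escape hatch of ``choosing $p$ in a narrow range just above $q$'' is unavailable: $p$ must exceed the diameter of $k\cA-k\cA$ for reduction mod $p$ to be a Freiman $k$-isomorphism on $\cA$, and that diameter can be arbitrarily large compared with $q\ge \#(k\cA-k\cA)$ (take $\cA=\{0,1,T\}$ with $T$ huge). Finally, the ``joint averaging'' cannot be arranged in the form you set it up: with the single fixed window $[0,p/k)$ you only know $\#\cB_\lambda\ge\#\cA/k$ on average over $\lambda$, and subtracting an exceptional set of $\lambda$ of density close to $1$ destroys that average.

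The two standard repairs are as follows. (i) Exploit the symmetry $d\mapsto -d$: the set of $\lambda$ that are bad for $d$ coincides with the set bad for $-d$ (replace $s$ by $-s$; a nonzero multiple of $q$ in $(-p,p)$ congruent to $\lambda d$ mod $p$ exists iff one congruent to $-\lambda d$ does), and the nonzero elements of $D=k\cA-k\cA$ come in pairs $\{d,-d\}$, so the union bound improves to $\frac{\#D-1}{2}\cdot\frac{2}{q}\le\frac{q-1}{q}<1$, and a good $\lambda_0$ exists. (ii) For that fixed good $\lambda_0$, do not average: partition $\{0,\dots,p-1\}$ into the $k$ intervals $[jp/k,(j+1)p/k)$ and apply the pigeonhole principle to find one interval containing at least $\#\cA/k$ of the values $\widetilde{\lambda_0 a}$, $a\in\cA$; translating that interval to the origin is itself a Freiman $k$-isomorphism, after which your computation ($s\in(-p,p)$, $s\equiv\lambda_0 d\pmod p$, $s\equiv 0\pmod q$) goes through unchanged. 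With these two changes the argument closes; compositeness of $q$ causes no difficulty, since $q$ enters only through reduction of integers modulo $q$ and never through invertibility.
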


We now use  Lemma~\ref{lem:RuzsaModel} to show that sets with a small
doubling contain subsets of a give cardinality and also with small
doubling. We present it in a more general and explicit form 
than we need for applications, as we think it maybe of independent 
interest. 

\begin{lemma}
\label{lem:SmallDouble}
Suppose that  $\cA \subseteq \cG$ 
is a finite nonempty subset of a  torsion-free Abelian group $\cG$
of cardinality $N =\#\cA$
such that for some $L\ge 1$ we have $\#(2\cA) \le LN$.
Then for any positive integer $M\le N$ 
there is a set 
$\cC \subseteq \cA$ with 
$$
\# \cC = M \mand \#(2\cC) \le 10L^4M.
$$
\end{lemma}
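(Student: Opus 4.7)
The plan is to combine Pl\"unnecke's inequality with Ruzsa's Modelling Lemma and a pigeonhole averaging over intervals in $\Z/q\Z$. First I would apply Lemma~\ref{lem:PlunIneq} with $\cB = \cA$ and $K = L$ (valid since $\#(2\cA) \le LN$) to deduce $\#(2\cA - 2\cA) \le L^4 N$. Together with the elementary lower bound $\#(2\cA - 2\cA) \ge \#(\cA - \cA) \ge N$, this places the integer $q := \#(2\cA - 2\cA)$ in the range $N \le q \le L^4 N$. Lemma~\ref{lem:RuzsaModel} applied with $k=2$ then provides a subset $\cB \subseteq \cA$ with $\#\cB \ge N/2$ and a Freiman $2$-isomorphism $\psi : \cB \to \cB' \subseteq \Z/q\Z$.

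If $M > N/2$, there is nothing to prove: any $M$-element subset $\cC \subseteq \cA$ satisfies $\#(2\cC) \le \#(2\cA) \le LN < 2LM \le 10L^4 M$. Otherwise $M \le N/2 \le \#\cB'$, and I look for a short interval in $\Z/q\Z$ capturing $M$ elements of $\cB'$. Setting $\ell := \lceil Mq/\#\cB' \rceil$ and $I := \{0, 1, \ldots, \ell-1\} \subseteq \Z/q\Z$ (note $\ell \le q$, because $M \le \#\cB'$), the identity $\sum_{t \in \Z/q\Z} \#(\cB' \cap (I+t)) = \ell \cdot \#\cB'$ supplies a translate $I+t$ with $\#(\cB' \cap (I+t)) \ge \ell\,\#\cB'/q \ge M$. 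I then pick any $M$-element subset $\cC' \subseteq \cB' \cap (I+t)$ and set $\cC := \psi^{-1}(\cC') \subseteq \cB \subseteq \cA$, so that $\#\cC = M$.

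Finally, since $\psi$ is a Freiman $2$-isomorphism, $\#(2\cC) = \#(2\cC')$; and since $\cC' \subseteq I+t$, the sumset $2\cC'$ is contained in $2I + 2t$, which has at most $2\ell - 1$ elements in $\Z/q\Z$ (regardless of any wraparound). Using $\#\cB' \ge N/2$ and $q \le L^4 N$ gives $\ell \le Mq/\#\cB' + 1 \le 2L^4 M + 1$, whence $\#(2\cC) \le 2\ell - 1 \le 4L^4 M + 1 \le 10L^4 M$ for $L, M \ge 1$. The main thing to be careful about is the coupled choice of $q$: the value $q = \#(2\cA - 2\cA)$ is simultaneously large enough for the Modelling Lemma to apply and small enough (within a factor $L^4$ of $\#\cB'$) for the interval produced by averaging to be short; once this balance is locked in, the rest is routine bookkeeping of the constants.
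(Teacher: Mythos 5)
Your proof is correct and follows essentially the same route as the paper's: Pl\"unnecke to bound $\#(2\cA-2\cA)\le L^4N$, Ruzsa's Modelling Lemma with $k=2$, an averaging argument over translates of a short interval in $\Z/q\Z$, and pulling back via the Freiman $2$-isomorphism (the paper picks $q$ in $[L^4N,2L^4N]$ rather than $q=\#(2\cA-2\cA)$, but this is immaterial). The constant bookkeeping also checks out, so there is nothing to add.
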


\begin{proof} If $M \ge N/2$ we simply take $C$ 
to be any subset of $\cA$ of cardinality $M$.
Then 
$$
\#(2\cC)  \le  \#(2\cA) \le LN \le 2LM.
$$

Now assume that $M \le N/2$. 
First we note that applying Lemma~\ref{lem:PlunIneq}, we derive 
$\#(2\cA - 2\cA) \le K N$, where $K = L^4$. 

Let 
$$\cB \subseteq \cA \mand KN \le q \le 2KN
$$ 
be as in Lemma~\ref{lem:RuzsaModel}
(applied with $k=2$)
and let $\psi$ be the corresponding Freiman $2$-isomorphism.
We consider the set $\cX = \psi(\cB) \subseteq \Z/q\Z$.
As we have noticed, $\psi$ is an injection, so 
$$
\# \cX  = \# \cB\ge N/2 \ge M.
$$ 
By a simple averaging argument, for any integer  $R\ge 1$ there is a 
subset   $\cY \subseteq \Z/q\Z$ of $R$ consecutive residue classes modulo $q$, 
that is,    of $\{r, \ldots, r+R-1\}$ for some $r \in \Z$
and such that 
$$
\#\(\cX \cap \cY\) \ge  \frac{\#\cX \cdot \#\cY}{q} = \frac{R}{q}\#\cX.
$$
We now take 
$$
R = \rf{\frac{qM}{\#\cX}}
$$
to guarantee $\#\(\cX \cap \cY\)\ge M$.
 We now collect arbitrary $M$ elements of  
$\cX \cap \cY$ in one set $\cZ$ and define 
$$
\cC = \psi^{-1}(\cZ).
$$
We clearly have $\# \cC = \# \cZ = M$ and
also by the property of Freiman $2$-isomorphisms
$$
 \#(2\cC)  =  \#(2\cZ) \le \#(2\cY)  \le  2\#\cY = 2R
$$
(since $\cY$ consists of consecutive residue classes). 
Furthermore, we have
$$
R  \le   \rf{\frac{qM}{\#\cX}} \le
   \rf{2qM/N}  \le  \rf{4KM} =  \rf{4L^4M}
 \le 5L^4M
$$
which concludes the proof. 
\end{proof}

We now  see that Lemma~\ref{lem:SmallDouble} implies 
that an
approximate subgroup of $\F_p^*$ contains subsets of any size 
that   behave as approximate subgroups.

\begin{lemma}
\label{lem:Subset AprSubgr}
For any approximate subgroup  $\cU \subseteq \F_p^*$, for any integer $M \le \# \cU$ 
one can find a subset $\cV \subseteq \cU$ such that $\# \cV=M$ and 
$$
\# \cV^{(2)} \le  \# \cV (\# \cU)^{o(1)},
$$
\end{lemma}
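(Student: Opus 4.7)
The plan is to transfer the multiplicative problem in $\F_p^*$ to an additive problem in the torsion-free group $\Z$, where Lemma~\ref{lem:SmallDouble} applies directly. To set this up, I would fix a generator $g$ of $\F_p^*$ and let $\psi : \F_p^* \to \{0,1,\ldots,p-2\} \subseteq \Z$ send $u$ to its discrete logarithm base $g$, so that $\psi$ is a bijection onto its image. Writing $\cA = \psi(\cU)$, we have $\#\cA = \#\cU$.

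The key observation is that $\psi$ carries products to sums modulo $p-1$, so $\psi(\cU^{(2)})$ equals the reduction of $\cA+\cA$ modulo $p-1$. Since $\cA+\cA \subseteq \{0,1,\ldots,2p-4\}$, this reduction is at most $2$-to-$1$, which together with the approximate subgroup hypothesis gives
$$
\#(2\cA) \le 2\,\#\cU^{(2)} \le 2(\#\cU)^{1+o(1)} = (\#\cU)^{1+o(1)}.
$$
Hence $\cA \subseteq \Z$ has small doubling with $L = (\#\cU)^{o(1)}$. I would then invoke Lemma~\ref{lem:SmallDouble} with this $\cA$, with $N = \#\cU$ and with the prescribed $M \le N$, obtaining a subset $\cC \subseteq \cA$ with $\#\cC = M$ and
$$
\#(2\cC) \le 10 L^4 M = M \cdot (\#\cU)^{o(1)}.
$$

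Finally, pulling back via $\cV = \psi^{-1}(\cC) \subseteq \cU$, we get $\#\cV = M$, and since $\psi(\cV^{(2)})$ is contained in the image of $2\cC$ under reduction modulo $p-1$, we conclude
$$
\#\cV^{(2)} \le \#(2\cC) \le \#\cV \cdot (\#\cU)^{o(1)},
$$
as required. The only conceptual obstacle is that $\F_p^*$ is cyclic of order $p-1$ and therefore has torsion, so Lemma~\ref{lem:SmallDouble} cannot be applied to $\cU$ directly; the lifting trick above neutralises this at the negligible cost of a factor of $2$ in the doubling constant, which is absorbed into the $(\#\cU)^{o(1)}$ term.
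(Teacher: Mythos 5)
Your proof is correct and follows essentially the same route as the paper: fix a primitive root, lift $\cU$ to its set of discrete logarithms in $\Z$, observe that reduction modulo $p-1$ on $\{0,\ldots,2p-4\}$ is at most $2$-to-$1$ so the doubling constant only worsens by a factor of $2$, and then apply Lemma~\ref{lem:SmallDouble}. No substantive differences from the paper's argument.
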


\begin{proof}We fix a primitive root $g$ of $\F_p^*$ 
and define the set 
$$
\cA = \{a\in \{0, \ldots, p-2\}~:~ g^a  \in \cU\}.
$$ 
We consider $\cA$ as the set of integers and since $0 \le a+b \le 2p-4$, 
at most two elements from $2A$ correspond to the same element 
in $\cU^{(2)}$. So,  we conclude that 
$$
\#(2A) \le 2 \# (\cU^{(2)}).
$$
The result now follows immediately from Lemma~\ref{lem:SmallDouble}. 
\end{proof}

We note that in our applications of Lemma~\ref{lem:Subset AprSubgr}
the sets $\cU$ and $\cV$ are 
of comparable cardinalities so $(\# \cU)^{o(1)} = (\# \cV)^{o(1)}$
so $\cV$ is also an approximate subgroup. 

\section{Some Equation over $\F_p$ with Variables from 
Intervals and Subgroups}

One easily verifies that Corollary~\ref{cor:PowerApproxSubgr} allows 
us to obtain the 
following  slight variation  of~\cite[Theorem~1]{CillGar2} where instead 
of the sets  $\cU \subseteq \F_p$ with $\# \cU^{(2)} \le 10\# \cU$ we use
approximate subgroups. The proof then goes through without any changes. 

\begin{lemma}
\label{lem:eq2var}
Let an initial interval $\cI \subseteq \F_p$ of length $H$ and an approximate
subgroup  $\cU \subseteq \F_p^*$ of size $N$ satisfy
$$H^{k}N < p \mand  N \le p^{k/(2k+1)}
$$
for some fixed integer $k\ge 1$.
Then the number $J$ of solutions of the equation over $\F_p$
$$
x_1 = x_2u, \qquad u\in \cU,\  x_1, x_2 \in \cI, 
$$
satisfies 
$$
J \le H N^{o(1)} . 
$$
\end{lemma}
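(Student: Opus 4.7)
The plan is to follow the proof of \cite[Theorem~1]{CillGar2} essentially verbatim, the only change being that wherever the original argument invokes the small-doubling hypothesis $\#\cU^{(2)} \le 10\#\cU$ (or, by iteration, a bound $\#\cU^{(m)} \ll \#\cU$ for some fixed $m \ge 2$) we appeal to Corollary~\ref{cor:PowerApproxSubgr} instead. The latter supplies $\#\cU^{(m)} \le N^{1+o(1)}$ for any fixed $m$, which is only slightly weaker than a linear bound and is entirely harmless for our purposes, since the discrepancy is absorbed into the $N^{o(1)}$ factor of the final estimate.

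More concretely, the key steps I would carry out are as follows. First, raise the equation $x_1 \equiv x_2 u \pmod p$ to the $k$-th power, so that $x_1^k \equiv x_2^k v \pmod p$ with $v = u^k \in \cU^{(k)}$; by the hypothesis $H^k N < p$ one has $H^k < p$, hence $x_1^k, x_2^k$ are genuine integers in $\{1,\ldots,p-1\}$, and $\#\cU^{(k)} \le N^{1+o(1)}$ by Corollary~\ref{cor:PowerApproxSubgr}. Second, form $k$-fold products of solutions to produce triples $(X_1, X_2, V) \in [1, H^k]^2 \times \cU^{(k)}$ with $X_1 \equiv X_2 V \pmod p$, controlling the multiplicity of each such triple by the standard divisor bound $d_k(X) \le H^{o(1)}$ for $X \le H^k$. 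Third, bound the number of triples: once $(X_2, V)$ is fixed, the integer $X_1 \in [1, H^k]$ is uniquely determined modulo $p$, giving at most $H^k N^{1+o(1)}$ triples. Finally, I would apply H\"older's inequality and invoke the second hypothesis $N \le p^{k/(2k+1)}$ in the concluding optimisation to arrive at $J \le H N^{o(1)}$.

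The hardest part will be this last step. A direct combination of the three preceding estimates yields only $J^k \le H^{k+o(1)} N^{1+o(1)}$, that is $J \le H N^{1/k+o(1)}$, which is too weak. Sharpening this to $H N^{o(1)}$ requires both standing hypotheses simultaneously and is the genuine technical content of \cite[Theorem~1]{CillGar2}, carried out there through a careful bookkeeping of higher moments of the representation function together with the divisor-function estimate. Since no ingredient beyond Corollary~\ref{cor:PowerApproxSubgr} enters their argument, we can transplant it to the approximate-subgroup setting without any further modification.
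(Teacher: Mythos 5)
Your proposal matches the paper's own treatment: the paper gives no independent proof of Lemma~\ref{lem:eq2var} but simply observes that the argument of~\cite[Theorem~1]{CillGar2} goes through verbatim once every appeal to the small-doubling hypothesis $\#\cU^{(2)}\le 10\,\#\cU$ (and its iterates $\#\cU^{(m)}$) is replaced by Corollary~\ref{cor:PowerApproxSubgr}, the extra $N^{o(1)}$ being absorbed into the final bound. That is exactly your plan, so the approaches coincide; the internal sketch you give of the Cilleruelo--Garaev argument is not itself needed (and, as you note, is not sharp enough on its own), since both you and the paper ultimately defer the technical core to the cited reference.
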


Accordingly, we also have the following version 
of~\cite[Corollary~1]{CillGar2}:

\begin{cor}
\label{cor:eq2var}
Let an initial interval  $\cI \subseteq \F_p$ of length $H$ and an approximate
subgroup  $\cU \subseteq \F_p^*$ of size $N$ satisfy
$$H^{k}N < p \mand  N \le p^{k/(2k+1)}
$$
for some fixed integer $k\ge 1$.
Then the number $K$ of solutions of the equation over $\F_p$
$$
x_1u_1 = x_2u_2,  \qquad u_1,u_2\in \cU,\ x_1, x_2 \in \cI, 
$$
satisfies 
$$
K \le H N^{1+o(1)} .
$$
\end{cor}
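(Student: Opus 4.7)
The plan is to reduce Corollary~\ref{cor:eq2var} to Lemma~\ref{lem:eq2var} by fixing one of the two multiplicative unknowns and passing to a multiplicative translate of $\cU$. More precisely, I would rewrite the equation $x_1 u_1 = x_2 u_2$ in the equivalent form
$$
x_2 = x_1 v, \qquad v = u_1 u_2^{-1}.
$$
For each fixed $u_2 \in \cU$, the parameter $v$ then ranges over $\cV_{u_2} := \cU \cdot u_2^{-1}$, and the solutions of the original equation with this prescribed value of $u_2$ are in bijection with the triples $(x_1, v, x_2) \in \cI \times \cV_{u_2} \times \cI$ satisfying $x_2 = x_1 v$.

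The second step is to check that for each such $u_2$ the set $\cV_{u_2}$ meets every hypothesis of Lemma~\ref{lem:eq2var}. One has $\#\cV_{u_2} = N$, so the size conditions $H^{k} N < p$ and $N \le p^{k/(2k+1)}$ transfer verbatim. For any integer $m \ge 1$ the $m$-fold product set
$$
\cV_{u_2}^{(m)} = \cU^{(m)} \cdot u_2^{-m}
$$
is a multiplicative translate of $\cU^{(m)}$ and hence has the same cardinality; in particular $\#\cV_{u_2}^{(2)} = \#\cU^{(2)} \le N^{1+o(1)}$, so $\cV_{u_2}$ is again an approximate subgroup, and the higher product sets needed inside the proof of Lemma~\ref{lem:eq2var} obey exactly the bounds that Corollary~\ref{cor:PowerApproxSubgr} provides for $\cU$ itself. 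Consequently, Lemma~\ref{lem:eq2var} applied with $\cV_{u_2}$ in place of $\cU$ would yield at most $H N^{o(1)}$ contributions from each fixed $u_2$.

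The last step is to sum over the $N$ choices of $u_2 \in \cU$, giving
$$
K \le N \cdot H N^{o(1)} = H N^{1+o(1)},
$$
as required. There is no substantial obstacle; the only point that deserves verification is that a multiplicative translate of an approximate subgroup is itself an approximate subgroup with identical $m$-fold product cardinalities at every level $m$, which is what allows Lemma~\ref{lem:eq2var} to be re-applied without modification.
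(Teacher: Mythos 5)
Your argument is correct and is essentially the derivation the paper has in mind: the paper gives no explicit proof, deferring to the analogue of Corollary~1 of Cilleruelo--Garaev, which is obtained from the two-variable count in exactly this way --- fix one of $u_1,u_2$, observe that the multiplicative translate $\cU\cdot u_2^{-1}$ has the same cardinality and the same $m$-fold product set sizes (hence remains an approximate subgroup satisfying the hypotheses of Lemma~\ref{lem:eq2var} uniformly in $u_2$), and sum the resulting bound $HN^{o(1)}$ over the $N$ choices. Your explicit verification that translation preserves the approximate-subgroup property at every level $m$ is the one point that needs checking, and you have checked it.
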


 We now prove the following direct extension of~\cite[Lemma~7]{CillGar2}:

\begin{lemma}\label{lem: eq3var}
Let an initial interval $\cI \subseteq \F_p$ of length $H$ and an approximate
subgroup  $\cU \subseteq \F_p^*$ of size $N$ satisfy
$$H \le N/2, \qquad H^kN < p, \qquad  N \le p^{k/(2k+1)}
$$
for some fixed integer $k\ge 1$ and let $\cQ$ be the set of 
primes $q \in [N/2, N]$. 
Then the number $S$ of solutions of the equation over $\F_p$
$$
q_1u_1 x_1 =q_2u_2 x_2, \qquad  q_i \in Q, \ u_i  \in \cU,\  x_i \in \cI, \quad i=1,2,
$$
satisfies 
$$
S \le H N^{2+o(1)}. 
$$
\end{lemma}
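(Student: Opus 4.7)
The proof follows the strategy of~\cite[Lemma~7]{CillGar2}, with the only substantive change being the replacement of the subgroup identity $\#\cG^{(m)}=\#\cG$ by the approximate-subgroup bound $\#\cU^{(m)}\le(\#\cU)^{1+o(1)}$ supplied by Corollary~\ref{cor:PowerApproxSubgr}. The plan is to split $S=S_{\rm diag}+S_{\rm off}$ according to whether $q_1=q_2$ or $q_1\ne q_2$.

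For the diagonal contribution the equation reduces to $u_1x_1=u_2x_2$, whence Corollary~\ref{cor:eq2var} gives at most $\#\cQ\cdot HN^{1+o(1)}\le HN^{2+o(1)}$.

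For the off-diagonal contribution I would first observe that, because $N\le p^{k/(2k+1)}<p^{1/2}$, any product $q_iq_j'$ of two primes in $\cQ$ is less than $p$; hence $q_2/q_1\equiv q_2'/q_1'\pmod p$ forces $q_2q_1'=q_1q_2'$ in $\Z$ and, by unique factorisation, $(q_1,q_2)=(q_1',q_2')$ as soon as $q_1\ne q_2$. Introducing $M(\mu)=\#\{(u_1,u_2,x_1,x_2)\in\cU^2\times\cI^2:u_1x_1=\mu u_2x_2\}$ and $\cR=\{q_2/q_1\bmod p:q_1,q_2\in\cQ,\ q_1\ne q_2\}$, the off-diagonal part equals $\sum_{\mu\in\cR}M(\mu)$ with $|\cR|\le\#\cQ^2\le N^2$. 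I would then apply Cauchy--Schwarz and interpret $\sum_\mu M(\mu)^2$ as the count of $8$-tuples in $\cU^4\times\cI^4$ satisfying $(u_1u_4)(x_1x_4)=(u_2u_3)(x_2x_3)$; after regrouping $u_1u_4,u_2u_3\in\cU^{(2)}$ and $x_1x_4,x_2x_3\in\cI\cdot\cI\subseteq\{1,\ldots,H^2\}$, Corollary~\ref{cor:eq2var} applied to the approximate subgroup $\cU^{(2)}$ and an initial interval of length $H^2$ should yield $\sum_\mu M(\mu)^2\le H^2N^{2+o(1)}$, and the final bound $S_{\rm off}\le HN^{2+o(1)}$ follows.

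The main obstacle I anticipate is the last step, namely verifying that the hypotheses of Corollary~\ref{cor:eq2var} transfer cleanly after the dilations $\cU\mapsto\cU^{(2)}$ and $H\mapsto H^2$. One needs an integer $k'$ with both $(H^2)^{k'}N^{1+o(1)}<p$ and $N^{1+o(1)}\le p^{k'/(2k'+1)}$, and the extra assumption $H\le N/2$---absent from Corollary~\ref{cor:eq2var}---is exactly what permits trading factors of $H$ for factors of $N$ in order to keep these constraints consistent with the original hypotheses $H^kN<p$ and $N\le p^{k/(2k+1)}$.
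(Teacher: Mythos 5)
Your decomposition into the diagonal case $q_1=q_2$ and the off-diagonal case, and your treatment of the diagonal case via Corollary~\ref{cor:eq2var}, match the paper. The off-diagonal argument, however, has a genuine gap: the second-moment bound $\sum_{\mu}M(\mu)^2\le H^2N^{2+o(1)}$ that you need after Cauchy--Schwarz is false in the admissible parameter range. Since every quadruple $(u_1,u_2,x_1,x_2)$ determines exactly one $\mu=u_1x_1/(u_2x_2)$, one has $\sum_{\mu\in\F_p^*}M(\mu)=(HN)^2$, and hence by Cauchy--Schwarz
$$
\sum_{\mu\in\F_p^*}M(\mu)^2\ \ge\ \frac{(HN)^4}{p-1}.
$$
Taking $k=1$, $N=\fl{p^{1/3}}$ and $H=\fl{N/2}$ (so that $H\le N/2$, $HN<p$ and $N\le p^{1/3}$ all hold), this lower bound has order $p^{5/3}$, while $H^2N^{2+o(1)}=p^{4/3+o(1)}$. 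The $8$-tuple count you propose to use is \emph{exactly} $\sum_{\mu\in\F_p^*}M(\mu)^2$, so no reorganization of that count can produce the bound you need; the loss comes from the ``main term'' $(HN)^4/p$, which dominates whenever $H^2N^2>p$, a situation the hypotheses permit. (A secondary problem with the same step: Corollary~\ref{cor:eq2var} applied to $\cU^{(2)}$ and $\{1,\dots,H^2\}$ counts \emph{distinct} solutions $(v_1,y_1,v_2,y_2)$, whereas the $8$-tuple count carries the representation multiplicities $\#\{(u,u')\in\cU^2:uu'=v\}$; their square sum is the multiplicative energy of $\cU$, which is at least $N^{3-o(1)}$, so even this translation loses too much.)

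The paper's treatment of the off-diagonal term avoids second moments altogether and instead uses the rigidity of the prime factors. One fixes $u_1,u_2,x_2$ (at most $HN^2$ choices) and sets $\lambda=u_2x_2/u_1$; the remaining equation is $q_1x_1/q_2=\lambda$ with $q_1\ne q_2$ and $x_1\le H$. Because $q_2$ is a prime in $[N/2,N]$ with $q_2\ne q_1$ and $q_2>H\ge x_1$, the fraction is reduced, i.e.\ $\gcd(q_1x_1,q_2)=1$; and since $(q_1x_1)\cdot q_2\le N^2H<p$, the uniqueness of the representation of a residue class by a reduced fraction with small numerator and denominator (\cite[Lemma~3]{CillGar2}) pins down $q_1x_1$ and $q_2$. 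Finally $x_1<q_1$ lets one recover $x_1$ and $q_1$ from the product $q_1x_1$. Thus each triple $(u_1,u_2,x_2)$ contributes at most one off-diagonal solution, giving $S_2\le HN^2$ outright. If you want to salvage an averaging argument, the quantity to control is the number of triples $(q_1,q_2,x_1)$ lying above a fixed $\lambda$ --- which is what the paper bounds by $1$ --- rather than the second moment of $M$.
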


\begin{proof}
We have $S=S_1+S_2$ where $S_1$ is the number of solutions 
with the additional condition $q_1=q_2$, and $S_2$ is the number of solutions with
$q_1\ne q_2$. We observe that 
Hence, we can apply Corollary~\ref{cor:eq2var} and derive
\begin{equation}
\label{eq:S1}
S_1 \le H N^{2+o(1)}.
\end{equation} 

It remains to estimate $S_2$, we fix $x_2,u_1,u_2$ such that
for $\lambda = u_2x_2/u_1$ we have 
\begin{equation}
\label{eq:S2 T2}
S_2\leq H N^2  T_2,
\end{equation} 
where $T_2$ is the number of solutions of the  equation
$$\frac{q_1x_1}{ q_2} = \lambda, \qquad  q_1,q_2 \in Q, \ q_1 \ne q_2, \ x_1 \in \cI.
$$
From $H<N/2$, we deduce that  $\gcd(q_1x_1,q_2)=1$. Since $N^2 H<p$, 
from~\cite[Lemma~3]{CillGar2} we 
derive  that $x_1q_1$ and $q_1$ are uniquely determined. Since $x_1<q_1$, the value $x_1q_1$ uniquely 
determines $x_1$ and $q_1$. Hence, $T_2\leq 1$,  which together with~\eqref{eq:S2 T2}
implies  
\begin{equation}
\label{eq:S2}
S_2 \le H N^{2}.
\end{equation} 
Combining~\eqref{eq:S1} and~\eqref{eq:S2}, we 
conclude the proof.
\end{proof}

\section{Products of Intervals and Subgroups}

Following the standard notation we use 
$$
\cA\cdot \cB = \{ab~:~a\in \cA, \ b \in \cB\}
$$
to denote the product set of two sets $\cA, \cB \in \F_p$. 
 
We say that a certain property holds  for almost all primes $p$, 
if  it fails for $o(Q/\log Q)$ primes $p\le Q$ as $x \to \infty$. 
 
Here we are interested in the cardinality of the set $\cI \cdot \cU$ for 
 an initial interval  $\cI \subseteq \F_p$  and an approximate
subgroup  $\cU \subseteq \F_p^*$. In particular,  for almost all primes $p$, 
we extend~\cite[Theorem~3]{CillGar2} to a wider range of $\#\cI$ 
and $\# \cU$.

\begin{theorem} 
\label{thm:IU=p}
For any fixed $\alpha$ with  $1/3 \le \alpha < 1/2$ and $\kappa> 0$, for almost all 
primes $p$,  
for any initial interval $\cI \subseteq \F_p$ of length $H$ and   approximate
subgroup  $\cU \subseteq \F_p^*$ of size $N$ that  satisfy
$$
H >p^{1- \alpha +\kappa} \mand N \ge p^{\alpha},
$$
we have 
$$\#\(\cI \cdot \cU\) = p + O(p^{1-\eta}), 
$$
where 
$$
\eta = \frac{3\kappa}{7(1 + \kappa)}.
$$ 
\end{theorem}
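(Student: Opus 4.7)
The plan is to bound the complement $T=\F_p^*\setminus(\cI\cdot\cU)$ via a variance (second-moment) argument with multiplicative characters modulo $p$, then invoke the almost-all-primes character-sum bounds from Section~\ref{sec:bound char}. To start, I would write the representation count $N_a=\#\{(x,u)\in\cI\times\cU:xu\equiv a\pmod p\}$ via orthogonality of multiplicative characters as
$$
N_a=\frac{HN}{p-1}+\frac{1}{p-1}\sum_{\chi\neq\chi_0}\overline\chi(a)\,S_\cI(\chi)S_\cU(\chi),
$$
with $S_\cI(\chi)=\sum_{x\in\cI}\chi(x)$ and $S_\cU(\chi)=\sum_{u\in\cU}\chi(u)$. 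Since $a\in T$ exactly when $N_a=0$, Parseval for multiplicative characters gives
$$
|T|\cdot\left(\frac{HN}{p-1}\right)^{2}\le\sum_{a\in\F_p^*}\left(N_a-\frac{HN}{p-1}\right)^{2}=\frac{1}{p-1}\sum_{\chi\neq\chi_0}|S_\cI(\chi)|^2|S_\cU(\chi)|^2,
$$
so the job reduces to controlling the right-hand side.

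Next, I would apply Corollary~\ref{cor:Almost all qh} with $Q=p$: for almost all primes $p$ and any fixed $\delta<3/14$, one has $|S_\cI(\chi)|\le H^{1-\delta}$ for every non-principal $\chi$. Combined with the identity $\sum_\chi|S_\cU(\chi)|^2=(p-1)N$ this yields $\sum_{\chi\neq\chi_0}|S_\cI(\chi)|^2|S_\cU(\chi)|^2\le H^{2-2\delta}(p-1)N$, and hence $|T|\ll p^2H^{-2\delta}/N$. Substituting $H\ge p^{1-\alpha+\kappa}$, $N\ge p^{\alpha}$ and letting $\delta\to 3/14^-$ then gives an estimate of the shape $|T|\le p^{1-\eta'}$ for some $\eta'>0$.

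The main obstacle is matching the specific exponent $\eta=3\kappa/(7(1+\kappa))$ uniformly across the full range $\alpha\in[1/3,1/2)$ with small $\kappa>0$: the naive bound $p^2H^{-2\delta}/N$ only becomes non-trivial once $2\delta(1-\alpha+\kappa)+\alpha>1$, which fails for small $\kappa$ with $\alpha$ near $1/3$. To close the gap I would invoke Lemma~\ref{lem:Subset AprSubgr} to pass from $\cU$ to an approximate subgroup $\cV\subseteq\cU$ of carefully chosen cardinality $M\le N$, small enough that the collision bounds of Corollary~\ref{cor:eq2var} (or Lemma~\ref{lem: eq3var}, applied after multiplying by the auxiliary set $\cQ$ of primes in $[N/2,N]$) apply to $\cI\cdot\cV$. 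This replaces the trivial bound $\sum|S_\cV(\chi)|^2=(p-1)M$ by an energy-type estimate of the form $\sum_{\chi\neq\chi_0}|S_\cI(\chi)|^2|S_\cV(\chi)|^2\ll HM^{1+o(1)}$; optimising $M$ as a function of $\alpha$, $\kappa$, and $\delta\to 3/14^-$ is where the stated $\eta=3\kappa/(7(1+\kappa))$ should emerge.
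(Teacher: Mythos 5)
Your opening variance computation is sound, and you correctly diagnose that the naive bound $|T|\ll p^2H^{-2\delta}N^{-1}$ is trivial in most of the stated range. But the repair you propose does not close the gap, because the collision bounds you want to invoke can never be applied to the full interval $\cI$: Corollary~\ref{cor:eq2var} and Lemma~\ref{lem: eq3var} require the interval length $h$ to satisfy $h^kM<p$, while extracting anything nontrivial from the resulting energy estimate requires the product of the interval length and $\#\cV$ to exceed $p$. Shrinking $\cU$ to $\cV$ does nothing to resolve this tension: with the full $\cI$ of length $H>p^{1-\alpha+\kappa}>p^{1/2}$ the hypotheses force $H\,\#\cV<p$, and then the variance bound $|T|\le p^2(H\,\#\cV)^{-1}(\#\cV)^{o(1)}$ is weaker than the trivial bound $|T|\le p$. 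The missing idea --- the heart of the Cilleruelo--Garaev argument that the paper adapts --- is to decompose the \emph{interval} multiplicatively: since $\cI$ is an initial interval, it contains every product $qxz$ with $q\in\cQ\subseteq[M/2,M]$, $x\le h$, $z\le\ell$, once $h\ell M\le H$, where $h=\fl{0.4p^{1-2\alpha}}$ and $\ell=\fl{p^{1/m}}$ are short. One then proves solvability of $qvxz=\lambda$; the collision Lemma~\ref{lem: eq3var} is applied to the \emph{short} interval $[1,h]$ together with $\cQ$ and $\cV$ (for which $h^kM<p$ and $M\le p^{k/(2k+1)}$ do hold for a suitable $k$ depending on $\alpha$), and Corollary~\ref{cor:Almost all qh} is applied to $\sum_{x\le h}\chi(x)$, not to $S_{\cI}(\chi)$.

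Secondly, even granting the decomposition, plain Cauchy--Schwarz does not produce the stated exponent. The paper counts solutions for $\lambda$ in the exceptional set directly and bounds the resulting sum by a three-factor H\"older inequality with exponents $\frac{m-1}{2m}+\frac{1}{2m}+\frac{1}{2}=1$, where $m=\rf{\kappa^{-1}}$: an $L^\infty$ factor $(h^{1-\delta}M\#\cQ)^{1/m}$ coming from the almost-all-primes bound on $[1,h]$; the second moment of $\sum_{x\le h}\sum_{q\in\cQ}\sum_{v\in\cV}\chi(qvx)$ controlled by Lemma~\ref{lem: eq3var}; the $2m$-th moment of $\sum_{z\le\ell}\chi(z)$ controlled by the divisor function (this is where the auxiliary factor $z\le\ell$ earns its keep); and the second moment over $\Lambda$. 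The final saving $h^{-2\delta/m}$ with $\delta\to 3/14$ and $1/m\ge\kappa/(1+\kappa)$ is exactly where $\eta=3\kappa/(7(1+\kappa))$ comes from; no choice of $M$ alone will produce it. In short, your proposal assembles several of the right tools (the almost-all character sum bound, Lemma~\ref{lem:Subset AprSubgr}, the auxiliary prime set $\cQ$, the collision estimates) but omits the step that makes them simultaneously applicable.
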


%

\begin{proof}  Let $Q$ be a sufficiently large positive integer. 
It is clear that it is enough to establish the 
desired result for all but $o(Q/\log Q)$ primes $p$ in the dyadic
interval $p \in [Q/2, Q]$. Using Corollary~\ref{cor:Almost all qh}
with some fixed positive $\varepsilon < 1-2 \alpha$ and $\delta < 3/14$ 
 we see 
that we can remove  $o(Q/\log Q)$ primes $p \in [Q/2, Q]$
such that for remaining primes $p$ we have 
\begin{equation}
\label{eq:bound p}
\max_{\chi\in \cX_p^*}   \left|\sum_{n=1}^h \chi(n) \right|\le h^{1-\delta} 
\end{equation}
for every integer 
\begin{equation}
\label{eq:h Interv}
h \in [p^{\varepsilon}, p].
\end{equation}
provided that $Q$ is large enough. 

We now always assume that $p$ is such that~\eqref{eq:bound p}
holds.

We now set
$$
m = \rf{\kappa^{-1}}, \quad \ell=\fl{p^{1/m}}, \quad   M = \fl{p^{\alpha}}, 
\quad h = \fl{0.4 p^{1-2 \alpha}} .
$$

By Lemma~\ref{lem:Subset AprSubgr}, we can choose a subset $\cV \subseteq \cU$ 
such that 
$$
\#\cV = M \mand \# \cV^{(2)} \le  \# \cV p^{o(1)} =  \(\# \cV\)^{1+ o(1)}  .
$$

Let $\cQ$ be the set of primes $q \in [M/2, M]$.

One verifies that 
$$
h \ell M \le 0.4 p^{1-2 \alpha} \times    p^{1/m}  \times p^{\alpha} 
=  0.4 p^{1-  \alpha+ 1/m} \le H
$$
as $1/m < \kappa$,
Hence it suffices to prove that for some $\rho >0$ that depends only on $\alpha$, $\kappa$ and $\varepsilon$, 
there are at most $O(p^{1-\rho})$ values of $\lambda \in \F_p^*$  for which the equation 
over $\F_p$ 
\begin{equation}
\label{eq:qvxz}
qvxz= \lambda
\end{equation}
has no solution in $q \in \cQ$, $v \in \cV$ and  positive integers $x \le h$, $z\le \ell$.

Let $\Lambda \subset \F_p^*$ be the set of this elements $\lambda$
and let $L = \# \Lambda$.

We use the orthogonality of characters $\chi\in \cX_p$
to express the number of solutions to~\eqref{eq:qvxz}
for $\lambda\in \Lambda$ via the following character sums:
$$
\frac{1}{p-1} \sum_{\lambda \in \Lambda} 
\sum_{q \in \cQ}\sum_{v \in \cV}  \sum_{x\leq h}\sum_{z\leq \ell} \sum_{\chi \in \cX_p}
\chi(qvxz\lambda^{-1}) = 0.
$$

We now clear the denominator, change the order of summations and 
separate the term corresponding to the principal character $\chi=\chi_0$. This leads us to the equation
$$
h\ell L M \#\cQ  + \sum_{\chi \in \cX_p^*}
\sum_{x\leq h}\sum_{q \in \cQ}\sum_{v \in \cV}\chi(qvx)  \sum_{z\leq \ell }\chi(z) 
\sum_{\lambda\in\Lambda}\chi(\lambda) = 0. 
$$
Therefore 
\begin{equation}
\label{eq:sepchar}
h\ell LM \#\cQ  \le W.
\end{equation}
where 
$$
W = \sum_{\chi \in \cX_p^*} 
\left|\sum_{x\leq h}\sum_{q \in \cQ}\sum_{v \in \cV}\chi(xqv)\right|
\left|\sum_{z\leq \ell }\chi(z)\right| \left|\sum_{\lambda\in\Lambda}\chi(\lambda)\right|.
$$
Because $\varepsilon < 1-2 \alpha$, if $Q$ is sufficiently large,  
the condition~\eqref{eq:h Interv} is satisfied for the above 
choice of $h$.  Therefore, the bound~\eqref{eq:bound p} holds and we write
$$
\left|\sum_{x\leq h}\sum_{q \in \cQ}\sum_{v \in \cV}\chi(xqv)\right|
\le \(h^{1-\delta}M\#\cQ\)^{1/m} 
\left|\sum_{x\leq h}\sum_{q \in \cQ}\sum_{v \in \cV}\chi(xqv)\right|^{(m-1)/m}.
$$

Using the fact that 
$$
\frac{m-1}{2m}+\frac{1}{2m}+\frac{1}{2}=1,
$$ 
and extending the summation over all $\chi \in \cX_p$
we 
obtain 
\begin{equation}
\begin{split}
\label{eq:bound W}
W&\le \(h^{1-\delta}M\#\cQ\)^{1/m}  \(\sum_{\chi \in \cX_p}
\left|\sum_{x\leq h}\sum_{q \in \cQ}\sum_{v \in \cV}\chi(xqv)\right|^{2}\)^{(m-1)/2m}\\
&\qquad \qquad \quad  \(\sum_{\chi \in \cX_p}\left|\sum_{z\leq \ell}\chi(z)\right|^{2m}\)^{1/2m}
\(\sum_{\chi \in \cX_p}\left|\sum_{\lambda \in \Lambda}\chi(\lambda)\right|^{2}\)^{1/2}.
\end{split}
\end{equation}

First, using the orthogonality of characters, we obtain
\begin{equation}
\label{eq:bound 1}
\sum_{\chi \in \cX_p}\left|\sum_{\lambda \in \Lambda}\chi(\lambda)\right|^{2}=(p-1)L
\end{equation}
and
$$
\sum_{\chi \in \cX_p}\left|\sum_{z\leq \ell}\chi(z)\right|^{2m}=(p-1)S, 
$$
where $S$ is the number of solutions of the following equation over $\F_p$:
$$
z_1\cdots z_m =  z_{m+1}\cdots z_{2m}, \qquad  1\le z_j\le \ell, \ i =1, \ldots, 2m.
$$
Since $L^{m}<p$, this is in fact equation over $\Z$ and from the well-known bounds
of the divisor function, we obtain $ S \le \ell^{m+o(1)}$ solutions. 
Hence, we have 
\begin{equation}
\label{eq:bound 2}
\sum_{\chi \in \cX_p}\left|\sum_{z\leq \ell}\chi(z)\right|^{2m} \le p \ell^{m+o(1)}.
\end{equation}

Furthermore, the same orthogonality 
property implies  that 
\begin{equation}
\label{eq:sum T}
\sum_{\chi \in \cX_p}\left|\sum_{x\leq h}\sum_{q \in \cQ}
\sum_{v \in \cV}\chi(qvx)\right|^{2}=(p-1)T,
\end{equation}
where $T$ is the number of solutions of the following equation over $\F_p$
\begin{equation}
\label{eq:qvx}
q_1v_1 x_1 =q_2v_2 x_2, \qquad  q_i \in Q, \ v_i  \in \cV,\  1\le x_i \le h, \quad i=1,2.
\end{equation}
Using $\alpha \ge 1/3$, one verifies that for any $k\ge 2$ and a sufficiently large $Q$, we have
$$
h  \le M/2.
$$
Furthermore, if we define an integer $k\ge 1$ by the inequalities
$$
\frac{k-1}{2k-1} < \alpha < \frac{k}{2k+1}.
$$
then we have 
$$
M \le  p^{\alpha} \le p^{k/(2k+1)}
$$
and 
$$
h^kM  < p^{k(1-2\alpha) + \alpha}  = p^{k-(2k-1)\alpha}  < p.
$$
Hence, due to 
the choice of $\cV$, we see that  Lemma~\ref{lem: eq3var} applies to 
the  equation~\eqref{eq:qvx} and implies $T \le h M^{2 + o(1)}$, which together 
with~\eqref{eq:sum T} yields
\begin{equation}
\label{eq:bound 3}
\sum_{\chi \in \cX_p}\left|\sum_{x\leq h}\sum_{q \in \cQ}
\sum_{v \in \cV}\chi(qvx)\right|^{2} \le p h M^{2 + o(1)}. 
\end{equation}

Substituting~\eqref{eq:bound 1}, \eqref{eq:bound 2} and~\eqref{eq:bound 3}
in~\eqref{eq:bound W} and recalling~\eqref{eq:sepchar}, we obtain 
$$h\ell LM \#\cQ  \le \(h^{1-\delta}M\#\cQ\)^{1/m} (p\ell^{m})^{1/2m}(pL)^{1/2}
(p h M^{2 + o(1)})^{(m-1)/2m}.$$ 
Since $\# \cQ= M^{1+o(1)}$, we obtain
$$
h \ell LM^2  \le  h^{(m+1)/2m -\delta/m} \ell^{1/2} p L^{1/2} M^{(m+1)/m+ o(1)}
$$
or 
$$
L \le   h^{-2\delta/m} \ell^{-1}  p^{2} (hM^2)^{-1+1/m}.
$$
Finally, since
$$
hM^2 = p^{1+o(1)} 
$$
we derive
$$
L \le   h^{-2\delta/m} \ell^{-1} p^{1+1/m + o(1)}
=  h^{-2\delta/m}   p^{1 + o(1)}.
$$
Recalling the choice of $m$ and $\delta$, we  conclude the 
proof. 
\end{proof}





In the case when $\cU$ is a subgroup of $\F_p^*$, we prove a  more  general and stronger result under the  GRH, 
 which is nontrivial for any $H$ and $N$ as long as 
 $HN>p^{1+\kappa}$ for some fixed $\kappa>0$. 
 
\begin{theorem} 
\label{thm:IU=p-GRH}
Fix $\kappa>0$. Assuming the GRH, for any prime $p$, for any initial interval $\cI \subseteq \F_p$ of length $H$ and subgroup  $\cU \subseteq \F_p^*$ of size $N$ such that $HN>p^{1+\kappa}$, we have
$$\#\(\cI \cdot \cU\) = p + O(p^{1-\kappa + o(1)}).
$$
\end{theorem}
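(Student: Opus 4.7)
The plan is to bound $L := \#\Lambda$, where $\Lambda$ is the complement of $\cI\cdot\cU$ in $\F_p^*$, and show that $L \le p^{1-\kappa + o(1)}$; since $\#(\cI\cdot\cU) = (p-1) - L$, this immediately yields the theorem. The approach is the standard character-sum method, but the key simplification is that because $\cU$ is an honest subgroup, the orthogonality sum collapses onto a small set of characters.

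More precisely, if $\lambda \in \Lambda$ has no representation $\lambda = xu$ with $x \in \cI$ and $u \in \cU$, then orthogonality of characters modulo $p$ gives
$$
0 = \frac{1}{p-1}\sum_{\chi \in \cX_p}\Bigl(\sum_{x \in \cI}\chi(x)\Bigr)\Bigl(\sum_{u \in \cU}\chi(u)\Bigr)\Bigl(\sum_{\lambda \in \Lambda}\overline\chi(\lambda)\Bigr).
$$
The inner sum $\sum_{u \in \cU}\chi(u)$ equals $N$ when $\chi$ is trivial on $\cU$ and vanishes otherwise. Writing $\cX_p^{\cU} \subseteq \cX_p$ for the subgroup of characters trivial on $\cU$, which has order $d := (p-1)/N$, and isolating the contribution of $\chi_0$, I obtain
$$
HL \le \sum_{\chi \in \cX_p^{\cU}\setminus\{\chi_0\}}|S_p(\chi;H)|\cdot\Bigl|\sum_{\lambda \in \Lambda}\chi(\lambda)\Bigr|.
$$

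Next I would apply Cauchy--Schwarz to split the two factors. The first factor is handled by GRH via \eqref{eq:SQRC}: each $|S_p(\chi;H)|^2 \le H\,p^{o(1)}$, and there are at most $d$ non-principal characters, so the squared sum is at most $dH\,p^{o(1)}$. For the second factor, expanding the square and using orthogonality of $\cX_p^{\cU}$ gives
$$
\sum_{\chi \in \cX_p^{\cU}}\Bigl|\sum_{\lambda \in \Lambda}\chi(\lambda)\Bigr|^2 = d\cdot\#\{(\lambda_1,\lambda_2) \in \Lambda^2 : \lambda_1\lambda_2^{-1} \in \cU\} \le dLN,
$$
since for each fixed $\lambda_2$ the partner $\lambda_1$ is confined to the coset $\lambda_2\cU$ of cardinality $N$. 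Combining these two estimates yields $HL \le dH^{1/2}L^{1/2}N^{1/2}p^{o(1)}$, which rearranges to
$$
L \le \frac{d^2N}{H}\,p^{o(1)} = \frac{(p-1)^2}{NH}\,p^{o(1)} \le p^{1-\kappa+o(1)},
$$
by the hypothesis $NH > p^{1+\kappa}$.

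I do not anticipate a real obstacle here. The argument is considerably cleaner than that of Theorem~\ref{thm:IU=p} precisely because the subgroup hypothesis reduces the orthogonality sum from all $p-1$ characters down to the $d$ characters trivial on $\cU$, removing any need for the additive-combinatorial tools of Section~4, the equation count of Lemma~\ref{lem: eq3var}, or the almost-all-primes character-sum bounds of Section~\ref{sec:bound char}. The GRH bound \eqref{eq:SQRC} then supplies exactly the square-root cancellation needed to close the estimate. The only mildly delicate point is bookkeeping the $p^{o(1)}$ factors, which is routine.
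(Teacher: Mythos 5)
Your proposal is correct and follows essentially the same route as the paper: orthogonality, collapse of the subgroup sum onto the $(p-1)/N$ characters trivial on $\cU$, Cauchy--Schwarz, and the GRH square-root cancellation bound \eqref{eq:SQRC}, arriving at the identical estimate $L \le p^{2+o(1)}/(NH)$. The only cosmetic difference is that you restrict the mean square of $\sum_{\lambda\in\Lambda}\chi(\lambda)$ to the annihilator subgroup (getting $dLN$) whereas the paper sums over all of $\cX_p$ (getting $(p-1)L$), which is the same quantity.
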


\begin{proof}
It suffices to prove that for some $\rho >0$ that depends only on $\varepsilon$, 
there are at most $O(p^{1-\rho})$ values of $\lambda \in \F_p^*$  for which the equation 
over the field $\F_p$ 
\begin{equation}
\label{eq:ux}
ux= \lambda
\end{equation}
has no solution in  $u \in \cU$ and  positive integers $x \le H$.

Let $\Lambda \subset \F_p^*$ be the set of this elements $\lambda$
and let $L = \# \Lambda$. 

We use the orthogonality of characters $\chi\in \cX_p$
to express the number of solutions to~\eqref{eq:ux}
for $\lambda\in \Lambda$ via the following character sums:
$$
\frac{1}{p-1} \sum_{\lambda \in \Lambda} 
\sum_{u \in \cU}  \sum_{x\leq H}\sum_{\chi \in \cX_p}
\chi(ux\lambda^{-1}) = 0.
$$

As in the proof of Theorem~\ref{thm:IU=p} this leads us to the equation
$$
H L N  + \sum_{\chi \in \cX_p^*}
\sum_{x\leq H}\sum_{u \in \cU}\chi(ux)
\sum_{\lambda\in\Lambda}\chi(\lambda) = 0. 
$$
Therefore 
\begin{equation}
\label{eq:sepcharRH}
H LN  \le W, 
\end{equation}
where 
$$
W = \sum_{\chi \in \cX_p^*} 
\left|\sum_{x\leq h}\sum_{u \in \cU}\chi(xu)\right|
 \left|\sum_{\lambda\in\Lambda}\chi(\lambda)\right|.
$$

Using the Cauchy inequality and extending the summation over all $\chi \in \cX_p$
we obtain 
\begin{equation}
\begin{split}
\label{eq:bound WRH}
W&\le  \(\sum_{\chi \in \cX_p^*}
\left|\sum_{x\leq H}\sum_{u \in \cU}\chi(xu)\right|^{2}\)^{1/2}\(\sum_{\chi \in \cX_p}\left|\sum_{\lambda \in \Lambda}\chi(\lambda)\right|^{2}\)^{1/2}.
\end{split}
\end{equation}

Now we use the fact that 
$$
\sum_{u \in \cU}\chi(u)=0
$$
if $\chi$ is nontrivial over the subgroup $\cU$. Hence there are at 
most $(p-1)/N$ characters such that the above sum does not vanish, 
which case it is equal to $N$.

 Therefore, proceeding as in the proof of Theorem~\ref{thm:IU=p} and using the bound~\eqref{eq:SQRC} we obtain
$$W\le \(\frac{p}{N}(NH^{1/2}p^{o(1)})^{2}\)^{1/2}(pL)^{1/2}.
$$
Substituting in~\eqref{eq:sepcharRH}, yields
$$H LN  \le (pN)^{1/2} H^{1/2} (pL)^{1/2} p^{o(1)},$$ 
which yields the bound
$$L \le \frac{p^{2+o(1)}}{NH}$$ 
that concludes the proof.
\end{proof}

\section{Comments}

Our proof of Corollary~\ref{cor:Almost all qh} 
uses~\eqref{eq:PVB} (with $\nu =1$) and thus does not 
extend to more general weighted sums $S_q(\chi;h;\cA)$.
However, for some interesting sequences $\cA$, that admit a
version of~\eqref{eq:PVB}  one can obtain such a 
result. For example, combining our argument with a
bound of Karatsuba~\cite{Kar}, one can derive a version 
of Corollary~\ref{cor:Almost all qh} 
for the sequence of shifted primes, that is, for the sequence 
$a_n = 1$ if $n= \ell+a$ for a prime $\ell$ and $a_n = 0$ 
otherwise (where $a\ne 0$ is a fixed integer).

We note that we have  slightly modified the scheme of the proof 
of~\cite[Theorem~3]{CillGar2} which has allowed us to extract 
the optimal saving $\eta$ from the preliminary bounds 
used in in the proof of Theorem~\ref{thm:IU=p}.
In particular, instead of separating the sum $W$ 
into contribution from ``good'' and ``bad'' characters
and balancing them, 
we have used a more direct approach via the H{\"o}lder 
inequality, which make the optimal use of bounds on the 
moments of the character sums involved (including 
the ``$\infty$-moment'', that is, the bound on the maximum value
of some of these sums). 

It is easy to see that if for some $p$ instead of~\eqref{eq:SQRC}
we have a weaker bound
$$
\max_{\chi \in \cX_p \backslash \{\chi_0\}}
\left|S_p(\chi;h) \right|  \le h^{1-\delta} p^{o(1)},
$$ 
with some fixed $\delta \le 1/2$, 
the method of proof of 
Theorem~\ref{thm:IU=p-GRH} still applies  and in the case when 
$\cU$ is a subgroup of $\F_p^*$,  leads to 
a nontrivial bound under the condition 
$H^{2 \delta} N>p^{1+\kappa}$. For example, this observation can 
be combined with Corollary~\ref{cor:Almost all qh} to
a nontrivial bound under the condition 
$H^{3/7} N>p^{1+\kappa}$ for almost all $p$. 
On the other hand using the conditional under the GRH 
bound~\eqref{eq:SQRC} in the proof of Theorem~\ref{thm:IU=p}
one can get the same result for all primes and also with 
a larger $\eta = \kappa/(1+\kappa)$.

The question about the set of elements missing from 
the set product $\cI \cdot \cU$, which is considered in 
Theorems~\ref{thm:IU=p} and~\ref{thm:IU=p-GRH} is a
multiplicative version of the question of~\cite{ShpSte}
 about the set of elements missing from 
the set difference $\cI - \cU$  (only in the case 
when $\cU$ is a subgroup of $\F_p^*$). 
The argument of~\cite{ShpSte} also works for the set 
sum $\cI + \cU$ without any changes. However in~\cite{ShpSte}
mostly the case of large subgroups of size $\# \cU > p^{1/2}$
is of interest and so  the technique used is different.

Finally, clearly slightly changing the values of $\eta$ one 
can also include the value $\alpha = 1/2$ in the range 
of Theorem~\ref{thm:IU=p} (for example, one can apply it
with $\alpha =1/2 - \kappa/2$ instead of $1/2$ 
and $\kappa/2$ instead of $\kappa/2$).

\section*{Acknowledgements}

The authors are very grateful to Ben Green for sketching 
them a proof of Lemma~\ref{lem:SmallDouble}. 

 The authors also would like to thank
 CIRM (Luminy) for its support and hospitality during the Research in Pairs 
program in May 2014, where the idea of this work was formed.

During the preparation of this work M.~Munsch was supported by a postdoctoral grant in CRM of Montreal under the supervision of Andrew Granville and Dimitris Koukoulopoulos
and I.~E.~Shparlinski was supported in part by the Australian Research Council
Grant DP140100118.

\end{document}